\DeclareFontFamily{OT2}{cmr}{\hyphenchar\font45 }
\DeclareFontShape{OT2}{cmr}{m}{n}{<->wncyr10}{}
\DeclareFontShape{OT2}{cmr}{m}{it}{<->wncyi10}{}
\DeclareFontShape{OT2}{cmr}{m}{sc}{<->wncysc10}{}
\DeclareFontShape{OT2}{cmr}{b}{n}{<->wncyb10}{}
\DeclareFontShape{OT2}{cmr}{bx}{n}{<->ssub*wncyr/b/n}{}
\DeclareFontFamily{OT2}{cmss}{\hyphenchar\font45 }
\DeclareFontShape{OT2}{cmss}{m}{n}{<->wncyss10}{}
\DeclareRobustCommand\cyr{\fontencoding{OT2}\selectfont}
\DeclareTextFontCommand{\textcyr}{\cyr}
\newtheorem{theorem}{Theorem}[section]
\newtheorem{prop}[theorem]{Proposition}
\newtheorem{corr}[theorem]{Corollary}
\theoremstyle{definition}
\newtheorem{deff}[theorem]{Definition}
\newtheorem{examp}[theorem]{Example}
\DeclareMathOperator{\supp}{supp}
\newcommand{\p}{\partial}
\newcommand{\z}{\bar z}
\DeclareMathOperator{\im}{Im}
\DeclareMathOperator{\re}{Re}
\newcommand{\Hp}{H^p_{f}(D)}
\newcommand{\Har}{H^p(D)}
\newcommand{\dc}{\mathcal{D}'(\p D)}
\newcommand{\Hpq}{H^{p}_{f,q}(D)}
\newcommand{\Hpqn}{H^{n,p}_{f,q}(D)}
\title{An atomic representation for Hardy classes of solutions to nonhomogeneous Cauchy-Riemann equations}
\author{William L. Blair}
\address{Department of Mathematical Sciences\\
  University of Arkansas\\
  Fayetteville, Arkansas}
\email{wlblair@uark.edu}
\keywords{Hardy spaces, atomic decomposition, generalized analytic functions, nonhomogeneous Cauchy-Riemann equations, boundary value in the sense of distributions, Hilbert transform, Schwarz boundary value problem}
\subjclass[2010]{30H10, 30G30, 35C15, 30E25, 35F30, 35G30, 30G20, 30E20, 46F20, 45E05, 35F15, 35G15, 30J99, 46F99}
\begin{document}

\begin{abstract}
    We develop a representation of the second kind for certain Hardy classes of solutions to nonhomogeneous Cauchy-Riemann equations and use it to show that boundary values in the sense of distributions of these functions can be represented as the sum of an atomic decomposition and an error term. We use the representation to show continuity of the Hilbert transform on this class of distributions and use it to show that solutions to a Schwarz-type boundary value problem can be constructed in the associated Hardy classes.  
\end{abstract}

\maketitle

\section{Introduction}

In this paper, we work to illustrate connections between Hardy classes of functions on the unit disk which satisfy certain nonhomogeneous Cauchy-Riemann equations and the Hardy spaces of distributions on the unit circle 

In \cite{GHJH2}, G. Hoepfner and J. Hounie prove that functions in the holomorphic Hardy spaces of functions on the unit disk have boundary values in the sense of distributions and these distributions can be represented by an atomic decomposition. While Coifman proved the analogous result in \cite{Coif1} for the upper half space of $\mathbb{R}^2$ and described the unit disk case in \cite{Coif2}, Hoepfner and Hounie filled a gap in the literature in making this proof for the disk (except the $p = 1$ case which can be found in \cite{Koosis}). One aspect of the result in \cite{GHJH2}, Theorem 2.2, of interest is that the theorem is not stated as biconditional, as it is in \cite{Coif1}. From L. Colzani's thesis \cite{Col}, we know that being a distribution on the unit circle with an atomic decomposition is equivalent to that distribution's Poisson integral having finite radial maximal function in the $L^p$ norm, but Colzani's result does not address whether these distributions are boundary values in the sense of distributions of certain classes of functions defined on the interior of the disk. Later in \cite{GHJH2}, the appropriate biconditional statement, with respect to functions in the holomorphic Hardy spaces, is given as Theorem 6.2. Yet, the following question arises. What classes of functions, other than the holomorphic Hardy space functions, have boundary values in the sense of distributions on the circle that have an atomic decomposition?

We consider Hardy classes of functions modeled on the holomorphic Hardy spaces by exchanging the condition of holomorphicity for being a solution to a nonhomogeneous Cauchy-Riemann equation $\frac{\p w}{\p\z} = f$, where $f \in L^q(D)$, $q>2$. This class of functions is represented by a sum of a holomorphic Hardy space function and Vekua's $T$ operator \cite{Vek} applied to the nonhomogeneous part $f$ of the corresponding partial differential equation. Consequently, boundary values in the sense of distributions of functions in these generalized Hardy classes are representable as the sum of a boundary value in the sense of distributions of a holomorphic Hardy space function and an error term, where the error term is manageable. Thus, these distributions are representable as a sum of an atomic decomposition and a nice error term. This allows us to generalize the classes of functions which have boundary values in the sense of distributions with an atomic representation to classes of functions whose boundary values in the sense of distributions are almost representable by an atomic decomposition. 

Using the found representation, we extend one of the applications from \cite{GHJH2}. In that paper, the authors use the atomic decomposition result to prove that the Hilbert transform on the circle is a continuous operator on the space of boundary values in the sense of distributions of functions in the holomorphic Hardy space. 
This application is nontrivial as the Hilbert transform is not a continuous operator on $L^p$ of the circle for any $p \leq 1$. We extend this result to the distributions which are boundary values of the Hardy classes of solutions to the specified nonhomogeneous Cauchy-Riemann equations. This provides examples of additional small $p$ classes where the Hilbert transform applied to the associated space of boundary values is an improvement over the associated small $p$ Lebesgue spaces. Also, we define a variation of the Schwarz boundary value problem such that the Hardy classes we have defined contain constructable solutions to this problem. This shows a direct connection between our generalized Hardy classes on the disk and the Hardy spaces of distributions on the circle because, in this boundary value problem, the boundary condition is a Hardy space distribution. This generalizes the Schwarz problem to a more general boundary condition, e.g., the boundary condition is a continuous function in \cite{BegBook} and \cite{Beg}. Additionally, this construction is an alternative way of extending atomic distributions to the interior of the disk since the constructed function is not necessarily harmonic. 

The arguments of the proofs of these results are general enough that they extend directly, for a restricted $p$ range, to the analogous cases for arbitrarily higher-order nonhomogeneous Cauchy-Riemann equations. This provides further example classes of functions with boundary values in the sense of distributions that are close in a qualitative way to having an atomic decomposition. Also, not only does this method provide an extension of atomic distributions to a solution of a first-order nonhomogeneous partial differential equation, it extends these distributions to solutions of higher-order equations, as we can constructively solve the associated $n^{\text{th}}$-order Schwarz problem with $n$ boundary conditions which are all Hardy space distributions.

We describe the outline of the article. In Section 2, we define the Hardy classes that we consider here, relate this more general definition to the classical Hardy spaces, and collect results about these classes. In Section 3, we show that the Hilbert transform is a continuous operator on the space of boundary values in the sense of distributions of these classes. Also, we define the Schwarz-type boundary value problem and show how to explicitly construct solutions that are members of the Hardy classes defined in Section 2. In Section 4, we define Hardy classes of solutions to higher-order nonhomogeneous Cauchy-Riemann equations and show that the results of Sections 2 and 3, found initially in the first-order setting, extend to these Hardy classes by iteration of the first-order results.

We thank Professor Gustavo Hoepfner for the suggestion to consider an ``almost'' atomic decomposition. We also thank Professor Andrew Raich for the countless helpful discussions, being generous with his time and suggestions, and his overall mentorship during the time when this work was being produced.  

\section{Definitions and Results}

Throughout we will let $D(0,r)$ denote the disk centered at the origin with radius $r$ in the complex plane, $D := D(0,1)$  is the unit disk in the complex plane, $\p D(0,r)$  and $\p D$ denote the boundary of $D(0,r)$ and $D$ respectively, $C^{k,\alpha}(D)$ is the space of functions on $D$ that are differentiable up to order $k$ with $k^{\text{th}}$ derivative in the space of $\alpha$-H\"older continuous functions, $C^\infty(\p D)$ is the space of smooth functions defined on $\p D$, and $\dc$ is the space of distributions on $\p D$.

\begin{deff}\label{hardyspacedef}
For each $p$ in the range $0 < p < \infty$ and function $f: D \to \mathbb{C}$, we define $H^p_{f}(D)$ to be the set of functions $w : D \to \mathbb{C}$ that are solutions to
\begin{equation*}
\frac{\p w}{\p \z} = f, \label{CR}
\end{equation*}
and satisfy
\[
||w||_{\Har} := \left(\sup_{0< r< 1} \frac{1}{2\pi} \int_{0}^{2\pi} |w(re^{i\theta})|^p \,d\theta\right)^{1/p} < \infty.
\]
\end{deff}

The case $f \equiv 0$ is that of the classic holomorphic Hardy spaces, which we denote by $\Har$. For background on the spaces $\Har$, see \cite{Duren}, \cite{Koosis}, and \cite{Rep}. We consider here only the cases $0 < p \leq 1$.

We define a boundary value in the sense of distributions as in \cite{GHJH2}. 

\begin{deff}Let $f$ be a function defined on $D$. We say that $f$ has a boundary value in the sense of distributions, denoted by $f_b$, if, for every $ \varphi \in C^\infty(\partial D)$, the limit
            \[
             \langle f_b, \varphi \rangle := \lim_{r \nearrow 1} \int_0^{2\pi} f(re^{i\theta}) \, \varphi(\theta) \,d\theta
            \]
            exists.
            
\end{deff}

From Theorem 2.2, Theorem 3.1, and Corollary 3.1 of \cite{GHJH2}, we know that the functions $w \in \Har$ have boundary values in the sense of distributions $w_b$ and those distributions can be represented as
\begin{equation}\label{atomicdecomp}
w_b = \sum_{n} c_n a_n,
\end{equation}
where $\{c_n\} \in \ell^p$ and $\{a_n\}$ is a collection of functions defined on $\p D$ satisfying 
\begin{itemize}
\item $\supp(a_n) \subset J_n$, where $J_n$ is an arc in $\p D$ (that could be all of $\p D$),
\item $|a_n(\theta)|\leq |J_n|^{-1/p}$,
\item $\int_0^{2\pi} a_n(\theta) \, \theta^k \,d\theta = 0,$ for $k \leq \frac{1}{p} -1$,
\end{itemize}
or $a_n \equiv c$ for some constant $c$ such that $|c| \leq 1$. We will call such $a_n$ $p$-atoms, and we will call a representation of the form of (\ref{atomicdecomp}) an atomic decomposition.

Now, we state auxiliary results that indicate some natural restrictions to place on the classes $\Hp$ that we consider. 

\begin{theorem}[Theorem 1.16 \cite{Vek} p.34]\label{oneonesix}
If $\frac{\p w}{\p\z} = f \in L^1(D)$, then 
\begin{equation}\label{cp}
w(z) = \Phi(z) - \frac{1}{\pi} \iint_D \frac{f(\zeta)}{\zeta - z}\,d\xi\,d\eta,
\end{equation}
where $\zeta = \xi + i\eta$ and $\Phi$ is holomorphic in $D$. 
\end{theorem}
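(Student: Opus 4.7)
The plan is to prove the representation by constructing the holomorphic $\Phi$ explicitly as the difference of $w$ and the Cauchy--Pompeiu (Vekua) integral, so everything hinges on showing that the second term already absorbs the $\bar\partial$-data of $w$. Concretely, set
\[
Tf(z) := -\frac{1}{\pi}\iint_D \frac{f(\zeta)}{\zeta - z}\,d\xi\,d\eta, \qquad \Phi(z) := w(z) - Tf(z).
\]
I would proceed in three steps: (i) verify that $Tf$ is defined a.e.\ on $D$ and lies in $L^1_{\loc}(D)$; (ii) prove the distributional identity $\frac{\partial}{\partial\bar z}(Tf) = f$ on $D$; (iii) conclude by Weyl's lemma that $\Phi$ is holomorphic, since its $\bar\partial$ vanishes in the sense of distributions.

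For step (i), the kernel $1/(\zeta - z)$ is locally integrable in the plane with respect to Lebesgue measure (the singularity is of order $1$, integrable in $\mathbb{R}^2$), so Fubini's theorem applied to the product measure on $D\times D$ together with $f\in L^1(D)$ yields that $\iint_D |f(\zeta)|/|\zeta - z|\,d\xi\,d\eta < \infty$ for a.e.\ $z\in D$ and that the resulting function is in $L^1(D)$. For step (ii), I would fix $\varphi\in C^\infty_c(D)$ and compute
\[
\Bigl\langle \tfrac{\partial}{\partial\bar z}(Tf),\varphi\Bigr\rangle = -\iint_D Tf(z)\,\tfrac{\partial\varphi}{\partial\bar z}(z)\,dx\,dy = \frac{1}{\pi}\iint_D f(\zeta) \left(\iint_D \frac{1}{\zeta-z}\,\tfrac{\partial\varphi}{\partial\bar z}(z)\,dx\,dy\right) d\xi\,d\eta,
\]
after applying Fubini (justified by step (i) and $\varphi \in C^\infty_c$). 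The inner integral is, by the Cauchy--Pompeiu formula applied to $\varphi$ on a large disk containing $\supp\varphi$, precisely $-\pi\varphi(\zeta)$, so the whole expression equals $-\iint_D f(\zeta)\varphi(\zeta)\,d\xi\,d\eta$, which matches $\langle f,\varphi\rangle$ up to the sign convention for distributional derivatives and yields $\partial_{\bar z}Tf = f$ as distributions.

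Step (iii) is then immediate: $\partial_{\bar z}\Phi = \partial_{\bar z}w - f = 0$ holds in the sense of distributions, and Weyl's lemma (hypoellipticity of the Cauchy--Riemann operator) upgrades this to the statement that $\Phi$ equals, almost everywhere, a holomorphic function on $D$. Rearranging gives the desired formula \eqref{cp}.

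The main obstacle is the distributional identity in step (ii). The sole integrability hypothesis $f\in L^1(D)$ means I cannot differentiate under the integral classically, so I must rely on Fubini plus the Cauchy--Pompeiu formula for the test function $\varphi$. The applicability of Fubini rests on the $L^1_{\loc}$ bound for the kernel, and the appeal to Pompeiu's formula requires care in choosing a disk large enough to contain $\supp\varphi$ so that the boundary term there vanishes. Once these bookkeeping items are handled, the conclusion is routine.
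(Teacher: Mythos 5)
The paper does not prove this statement at all: it is imported verbatim as Theorem~1.16 of Vekua's book, so there is no in-paper argument to compare against. Your proposal is the standard proof of that result and is essentially correct: local integrability of the kernel $1/|\zeta-z|$ in the plane gives $Tf\in L^1(D)$ via Tonelli, Fubini plus the Cauchy--Pompeiu formula for a compactly supported test function gives $\frac{\p}{\p\z}Tf=f$ in the sense of distributions, and Weyl's lemma (ellipticity of $\dbar$) shows $\Phi=w-Tf$ is a.e.\ equal to a holomorphic function. One bookkeeping slip: with your normalization the inner integral is $\iint \frac{\varphi_{\bar z}(z)}{\zeta-z}\,dx\,dy=+\pi\varphi(\zeta)$, not $-\pi\varphi(\zeta)$ (Cauchy--Pompeiu with vanishing boundary term reads $\varphi(\zeta)=-\tfrac{1}{\pi}\iint\frac{\varphi_{\bar z}(z)}{z-\zeta}\,dx\,dy$), which is exactly what makes the chain of equalities land on $+\langle f,\varphi\rangle$ rather than requiring an appeal to ``the sign convention'' at the end. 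That is a cosmetic correction, not a gap.
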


In \cite{Vek}, this type of formula is called the ``representation of the second kind,'' and we adopt this convention. The similarity between this representation and the well-known Cauchy-Pompeiu formula is clear. In \cite{KlimBook}, it is shown that in the cases of $w \in \Hp$ with:
\begin{enumerate}
\item[$(i)$] $1 \leq p < \infty$, $f = Aw + B\bar{w}$, $A, B \in L^s(D)$, $s>2$, or 
\item[$(ii)$] $1 < p < \infty$, $f = q\frac{\p w}{\p z}$, $q \in C^{k,\alpha}(D)$, $k\geq 0, 0 < \alpha < 1$, $|q(z)|\leq C(1-|\zeta(z)|)^{2\epsilon}$, where $0 < \epsilon < 1$, and $\zeta$ is the principle homeomorphism of the corresponding Beltrami equation,
\end{enumerate}
that $\Phi$ in (\ref{cp}) is in $\Har$. For more information about the specific equations involved in item $(i)$, see \cite{Vek} and \cite{KlimBook}, and for more information about the equations involved in item $(ii)$, see \cite{Vek} and \cite{ellipquasi}. To be clear, in Definition \ref{hardyspacedef}, we choose a specific partial differential equation $\frac{\p w}{\p\z} = f$, then the class $H^p_{f}(D)$ is the subset of the solutions to that equation which also satisfy the integrability condition associated with the holomorphic Hardy spaces of functions. Since the results that follow hold for any class of functions that satisfy the conditions of the definition, this allows us to study the solutions to a wide class of partial differential equations, including nonlinear partial differential equations. We work to show that a representation of the second kind exists in the more general setting. Let us state a few more results that we use to control the integral part in the representation.

\begin{deff}
For $f: D \to \mathbb{C}$ and $z  \in \mathbb{C}$, we denote by $T(\cdot)$ the integral operator defined by 
\[
T(f)(z) = -\frac{1}{\pi} \iint_D \frac{f(\zeta)}{\zeta - z}\, d\xi\,d\eta,
\]
whenever the integral is defined.
\end{deff}

\begin{theorem}[Theorem 1.26 \cite{Vek} p.\,47] \label{Vekonepointtwentysix}
If $f \in L^q(D)$, $1 \leq q \leq 2$, then $T(f) \in L^\gamma(D)$, $1 < \gamma < \frac{2q}{2-q}$. 
\end{theorem}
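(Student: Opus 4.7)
The plan is to realize $T(f)$ as a convolution and then apply Young's inequality for convolutions, filling in the remaining range of exponents using Lebesgue inclusions on the bounded domain $D$. Extending $f$ by zero outside $D$ and letting $K(w) = -\frac{1}{\pi w}$ restricted to the disk $D(0,2)$, for any $z \in D$ we have $|\zeta - z| < 2$ whenever $\zeta \in D$, so $T(f)(z)$ coincides with the convolution $(f * K)(z)$ for $z \in D$. A polar-coordinate computation shows that $K \in L^s(\mathbb{C})$ precisely when $s < 2$, since the integrand behaves like $r^{1-s}$ near the origin.

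With $K \in L^s(\mathbb{C})$ for every $s \in [1,2)$, Young's inequality gives $\|T(f)\|_{L^\gamma(D)} \leq \|f\|_{L^q(D)}\|K\|_{L^s(\mathbb{C})}$ whenever $1 + \frac{1}{\gamma} = \frac{1}{q} + \frac{1}{s}$. As $s$ sweeps $[1,2)$, the resulting exponent $\gamma$ sweeps the half-open interval $[q, \frac{2q}{2-q})$, establishing $T(f) \in L^\gamma(D)$ throughout that range. To cover the remaining range $1 < \gamma < q$, I would invoke that $D$ has finite Lebesgue measure, so $L^q(D) \hookrightarrow L^\gamma(D)$ for all $\gamma \leq q$; combining the two ranges gives $T(f) \in L^\gamma(D)$ for the full open interval $1 < \gamma < \frac{2q}{2-q}$.

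The main obstacle is the right endpoint $\gamma = \frac{2q}{2-q}$, which corresponds to $s = 2$: Young's inequality fails there because $1/w$ just misses being square-integrable near the origin. Recovering that endpoint would demand a genuinely sharper tool such as the Hardy-Littlewood-Sobolev inequality or a weak-type $L^{2,\infty}$ analysis of the kernel, but the open interval stated in the theorem is exactly what the elementary Young's argument delivers, and this is all that will be needed for the estimates on $T(f)$ that enter the representation-of-the-second-kind theory later in the paper.
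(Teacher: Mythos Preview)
Your argument via Young's convolution inequality is correct and delivers exactly the open range $1<\gamma<\frac{2q}{2-q}$ claimed. However, the paper does not supply its own proof of this statement: it is quoted verbatim as Theorem~1.26 from Vekua's book and used only as an auxiliary input to the representation-of-the-second-kind results. There is therefore nothing in the paper to compare against; your Young-inequality derivation (truncate the kernel to $D(0,2)$, note $1/|w|\in L^s$ for $s<2$, apply Young, then fill in the low exponents by the finite-measure inclusion) is a standard and entirely adequate route to the cited result.
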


\begin{theorem}[Theorem 1.4.7 \cite{KlimBook} p.\,17]\label{onefourseven}
If $f \in L^q(D)$, $1 < q \leq 2$, then $T(f)|_{\p D(0,r)} \in L^\gamma(\p D(0,r))$, $0 < r \leq 1$, where $\gamma$ satisfies $1 < \gamma < \frac{q}{2-q}$, and 
\[
||Tf||_{L^\gamma(\p D(0,r))} \leq C ||f||_{L^q(D)},
\]
where the constant does not depend on $r$ or $f$. 
\end{theorem}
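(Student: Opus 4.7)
The plan is to expose a circular convolution structure in $T(f)$ by passing to polar coordinates. Writing $z = re^{i\theta}$ and $\zeta = se^{i\phi}$ in the definition and substituting $\psi = \phi - \theta$ produces
\[
T(f)(re^{i\theta}) = -\frac{e^{-i\theta}}{\pi}\int_0^1 s\, ds \int_0^{2\pi} g_s(\theta + \psi)\, h_{r,s}(\psi)\, d\psi,
\]
where $g_s(\phi) := f(se^{i\phi})$ and $h_{r,s}(\psi) := (se^{i\psi} - r)^{-1}$. For each fixed $s$, the inner integral is a circular correlation of $g_s$ and $h_{r,s}$ evaluated at $\theta$, so $L^\gamma(\T)$ bounds in $\theta$ will come from Young's inequality on the torus.

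My first step is to apply Minkowski's integral inequality in the radial variable $s$ to move the $L^\gamma(\T)$ norm inside the $s$-integral, followed by Young's inequality for convolutions on $\T$ with exponents satisfying $\frac{1}{m} = 1 + \frac{1}{\gamma} - \frac{1}{q}$. This yields
\[
\bigl\|T(f)(re^{i\cdot})\bigr\|_{L^\gamma(\T)} \leq \frac{1}{\pi}\int_0^1 s\, \|g_s\|_{L^q(\T)}\, \|h_{r,s}\|_{L^m(\T)}\, ds.
\]
Next I would estimate the kernel norm by direct computation of $\int_0^{2\pi}|se^{i\psi} - r|^{-m}\, d\psi$; the denominator behaves like $(s-r)^2 + sr\,\psi^2$ near $\psi = 0$, which gives $\|h_{r,s}\|_{L^m(\T)} \leq C\,|s-r|^{1/m - 1}$ for $m > 1$ (with a logarithmic substitute at $m = 1$). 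Since $1/m - 1 = 1/\gamma - 1/q$, this supplies the desired pointwise bound in $s$.

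The final step is Hölder's inequality on $[0,1]$ with exponents $q$ and $q'$, splitting the integrand as $(s^{1/q}\|g_s\|_{L^q(\T)}) \cdot (s^{1-1/q}|s-r|^{1/\gamma - 1/q})$. The first factor has $L^q$-norm equal to $\|f\|_{L^q(D)}$ by polar coordinates. Using the identity $q'(1-1/q) = 1$, the $L^{q'}$-norm of the second factor is bounded by $\bigl(\int_0^1 s\,|s-r|^{q'(1/\gamma - 1/q)}\,ds\bigr)^{1/q'}$, which is finite and bounded uniformly in $r \in (0,1]$ exactly when $q'(1/\gamma - 1/q) > -1$; a short rearrangement shows this is equivalent to $\gamma < q/(2-q)$. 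The range $1 < \gamma \leq q$ can be recovered by combining the $\gamma = q$ case with the inclusion $L^q(\T) \hookrightarrow L^\gamma(\T)$, and in every case the constant is independent of $r$ and $f$.

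I expect the main obstacle to be recovering the sharp exponent $q/(2-q)$. A naive application of Minkowski's integral inequality directly to the area integral followed by Hölder inside yields only $\gamma < q$; the sharper range arises because the inner angular integral is a convolution on $\T$, so Young's inequality rather than Hölder's inequality governs that step. This precisely captures the extra regularization gained by restricting to a one-dimensional circle rather than to the full disk, and matching the exponents at the end of the Hölder step in $s$ is exactly what forces the condition $\gamma < q/(2-q)$ stated in the theorem.
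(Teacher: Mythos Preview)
This theorem is not proved in the paper; it is quoted from Klimentov's monograph (Theorem~1.4.7 there) and invoked as a black box in the proofs of Theorems~\ref{main} and~\ref{altmain}. There is therefore no in-paper argument to compare your proposal against.

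That said, your strategy---polar coordinates to expose a circular convolution, Minkowski in $s$, Young's inequality on $\T$ for the angular integral, then H\"older in the radial variable---is the standard route and correctly isolates the critical exponent $q/(2-q)$. One detail needs tightening: your kernel estimate $\|h_{r,s}\|_{L^m(\T)}\le C\,|s-r|^{1/m-1}$ drops a factor coming from the scaling near $\psi=0$. Writing $|se^{i\psi}-r|^2=(s-r)^2+4sr\sin^2(\psi/2)$ and rescaling gives the sharp near-diagonal bound
\[
\|h_{r,s}\|_{L^m(\T)}\lesssim |s-r|^{1/m-1}(sr)^{-1/(2m)},
\]
together with $\|h_{r,s}\|_{L^m(\T)}\lesssim |s-r|^{-1}$ when $s$ and $r$ are well separated. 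For $r$ bounded away from $0$ the extra factor $(sr)^{-1/(2m)}$ is a harmless constant and your H\"older step then goes through verbatim; this already covers the only use the paper makes of the result, since the $L^p$ means of the holomorphic piece $\Phi$ are nondecreasing in $r$ and hence only $r$ near $1$ is relevant there. If one reads the $L^\gamma(\p D(0,r))$ norm with respect to arc length, the additional factor $r^{1/\gamma}$ absorbs the small-$r$ growth and restores uniformity over all of $(0,1]$.
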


\begin{theorem}[Theorem 1.4.8 \cite{KlimBook} p.\,18]\label{onefoureight}
If $f \in L^q(D)$, $1 < q \leq 2$, then 
\[
\lim_{r\nearrow 1} \int_0^{2\pi} |T(f)(e^{i\theta}) - T(f)(re^{i\theta})|^\gamma \,d\theta = 0, 
\]
for $1 \leq \gamma < \frac{q}{2-q}$. 
\end{theorem}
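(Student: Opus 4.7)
The plan is to prove Theorem \ref{onefoureight} by a density argument that leverages the uniform bound provided by Theorem \ref{onefourseven}. First I would approximate $f$ in $L^q(D)$ by a sequence $f_n \in C_c^\infty(D)$, i.e., smooth functions with compact support inside the open disk; such approximation is available because $C_c^\infty(D)$ is dense in $L^q(D)$ for $1 < q \leq 2$. For each such $f_n$, the kernel $1/(\zeta - z)$ is smooth and bounded whenever $\zeta$ lies in the fixed compact set $\supp(f_n) \subset D$ and $z$ lies in a neighborhood of $\p D$, so $T(f_n)$ extends continuously (in fact, smoothly) to $\overline{D}$. In particular, $T(f_n)(re^{i\theta}) \to T(f_n)(e^{i\theta})$ uniformly in $\theta$ as $r \nearrow 1$, which implies the $L^\gamma$ convergence for the approximants.

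The core estimate is a triangle inequality of the form
\[
\Big(\int_0^{2\pi} |T(f)(re^{i\theta}) - T(f)(e^{i\theta})|^\gamma \,d\theta\Big)^{1/\gamma} \leq I_1 + I_2 + I_3,
\]
where $I_1$ is the $L^\gamma(d\theta)$-norm of $T(f-f_n)(e^{i\theta})$, $I_2$ is the $L^\gamma(d\theta)$-norm of $T(f_n)(re^{i\theta}) - T(f_n)(e^{i\theta})$, and $I_3$ is the $L^\gamma(d\theta)$-norm of $T(f-f_n)(re^{i\theta})$. Theorem \ref{onefourseven} at $r = 1$ yields $I_1 \leq C\|f - f_n\|_{L^q(D)}$; the same theorem at radius $r$, combined with the change of measure $d\sigma = r\,d\theta$ on $\p D(0,r)$, gives $I_3 \leq r^{-1/\gamma} C\|f - f_n\|_{L^q(D)} \leq C'\|f - f_n\|_{L^q(D)}$ uniformly for $r \in [1/2, 1]$, since the constant $C$ from Theorem \ref{onefourseven} does not depend on $r$. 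Given $\ep > 0$, first choose $n$ large enough that $I_1, I_3 < \ep/3$ uniformly in $r$; then, for this fixed $n$, use the uniform continuity of $T(f_n)$ on $\overline{D}$ to pick $r$ close enough to $1$ so that $I_2 < \ep/3$.

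The main obstacle is a conceptual one that is in fact already resolved by Theorem \ref{onefourseven}: for arbitrary $f \in L^q(D)$ with $1 < q \leq 2$, the function $T(f)$ need not be bounded or continuous up to $\p D$, so naive approaches based on pointwise convergence and dominated convergence are unavailable. What makes the density argument succeed is precisely the \emph{uniform-in-$r$} operator bound from $L^q(D)$ into $L^\gamma(\p D(0,r))$, which allows the error from replacing $f$ by $f_n$ to be controlled simultaneously on $\p D$ and on all nearby interior circles. Once that is in hand, the remainder of the proof is routine, with the approximants $f_n$ chosen smooth enough that the interior-to-boundary behavior of $T(f_n)$ is transparent.
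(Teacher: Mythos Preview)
The paper does not contain its own proof of this statement: Theorem~\ref{onefoureight} is quoted verbatim from \cite{KlimBook} (Theorem~1.4.8, p.~18) and used as a black box, so there is no in-paper argument to compare against.

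Your proposal is a standard and correct density argument. The only point worth flagging is the endpoint $\gamma = 1$: Theorem~\ref{onefourseven} is stated for $1 < \gamma < \frac{q}{2-q}$, so you cannot literally apply it at $\gamma = 1$ to bound $I_1$ and $I_3$. This is harmless, since the circle has finite measure and H\"older's inequality gives $\|g\|_{L^1(d\theta)} \leq (2\pi)^{1-1/\gamma'}\|g\|_{L^{\gamma'}(d\theta)}$ for any $\gamma' \in (1, \frac{q}{2-q})$; alternatively, one simply proves the result for all $\gamma$ in the open range and deduces the $\gamma = 1$ case by the same embedding. With that remark, your outline is complete.
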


\begin{theorem}[Theorem 1.19 \cite{Vek} p.38]\label{onepointnineteen}
For $f \in L^q(D)$, $q > 2$, $T(f) \in C^{0,\alpha}(\overline{D})$, with $\alpha = \frac{ q-2}{q}$. 
\end{theorem}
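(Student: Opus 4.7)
The plan is to estimate $|T(f)(z_1)-T(f)(z_2)|$ directly from the defining integral, using H\"older's inequality together with a splitting of the domain around the two points. First, boundedness of $T(f)$ on $\overline{D}$ follows from a single application of H\"older: with $q>2$, the conjugate exponent $q'=q/(q-1)$ satisfies $1<q'<2$, so $|\zeta-z|^{-q'}$ is integrable on $D$ uniformly in $z\in\overline{D}$, yielding $\|T(f)\|_{L^\infty(\overline{D})}\le C\|f\|_{L^q(D)}$.

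For the H\"older seminorm, fix $z_1,z_2\in\overline{D}$, set $\delta=|z_1-z_2|$, and use the algebraic identity $\tfrac{1}{\zeta-z_1}-\tfrac{1}{\zeta-z_2}=\tfrac{z_1-z_2}{(\zeta-z_1)(\zeta-z_2)}$ to rewrite $T(f)(z_1)-T(f)(z_2)$. Split $D=A\cup B$ with $A=D\cap\{|\zeta-z_1|<2\delta\}$ and $B=D\setminus A$. On $A$ I discard the cancellation and use the crude triangle bound $\bigl|\tfrac{1}{\zeta-z_1}-\tfrac{1}{\zeta-z_2}\bigr|\le|\zeta-z_1|^{-1}+|\zeta-z_2|^{-1}$; H\"older controls each term by $\|f\|_{L^q(D)}$ times $\bigl(\iint_A|\zeta-z_j|^{-q'}\,d\xi\,d\eta\bigr)^{1/q'}$, and the polar computation $\int_0^{C\delta}r^{1-q'}\,dr\le C\delta^{2-q'}$ yields a contribution from $A$ of at most $C\|f\|_{L^q(D)}\delta^{(2-q')/q'}$. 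On $B$ the triangle inequality gives $|\zeta-z_2|\ge\tfrac{1}{2}|\zeta-z_1|$, so $\bigl|\tfrac{1}{\zeta-z_1}-\tfrac{1}{\zeta-z_2}\bigr|\le 2\delta|\zeta-z_1|^{-2}$; H\"older plus the radial estimate $\int_{2\delta}^\infty r^{1-2q'}\,dr\le C\delta^{2-2q'}$ (which uses $q'>1$) produces a contribution of $C\delta\|f\|_{L^q(D)}\delta^{(2-2q')/q'}=C\|f\|_{L^q(D)}\delta^{(2-q')/q'}$.

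Combining the two pieces, $|T(f)(z_1)-T(f)(z_2)|\le C\|f\|_{L^q(D)}\delta^{(2-q')/q'}$, and the identity $(2-q')/q'=(q-2)/q=\alpha$ is precisely the claimed H\"older exponent; the estimate is uniform in $z_1,z_2\in\overline{D}$, so together with the $L^\infty$ bound from the first step one concludes $T(f)\in C^{0,\alpha}(\overline{D})$. The main bookkeeping obstacle is verifying that the hypothesis $q>2$ is exactly what the proof demands: this range gives $q'<2$ so the near-region integral converges at the singularity, gives $q'>1$ so the far-region radial integral decays as a power of $\delta$, and it makes the powers of $\delta$ from the two regions agree. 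Nothing in the argument uses anything specific about $D$ being the unit disk beyond its boundedness, so the same strategy would give the analogous estimate on any bounded planar domain.
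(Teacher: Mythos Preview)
Your argument is correct and is essentially the standard proof of this classical estimate. Note, however, that the paper itself does not prove this statement: it is quoted without proof from Vekua's monograph (Theorem~1.19, p.~38), so there is no ``paper's own proof'' to compare against. What you have written is the textbook potential-theoretic argument (near/far splitting plus H\"older), and the bookkeeping---in particular the verification that $q>2$ is exactly the threshold making both radial integrals finite and forcing the two exponents of $\delta$ to coincide at $(q-2)/q$---is handled cleanly.
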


Let us define the classes that we will now consider.

\begin{deff}
For $0 < p \leq 1$, we denote by $\Hpq$ the space of functions $w \in \Hp$ where $f \in L^q(D)$, $q > 1$.
\end{deff}

Our main result is the following.

\begin{theorem}\label{main}
For $0 < p \leq 1$ and $q>1$, every $w \in \Hpq$ has a representation of the second kind
\[
w(z) = \Phi(z) + T(f)(z),
\]
where $\Phi \in \Har$ and 
\[
T(f)(z) := -\frac{1}{\pi} \iint_D \frac{f(\zeta)}{\zeta - z} \,d\xi\,d\eta.
\]
Also, the boundary value in the sense of distributions of $w$, denoted by $w_b$, exists and can be represented as 
\[
w_b = \sum_{n} c_n a_n + T(f)_b.
\]
where $\{c_n\} \in \ell^p$, $\{a_n\}$ is a collection of $p$-atoms, and 
\[
T(f)_b \in \begin{cases} L^\gamma(\p D),\quad 1 < \gamma < \frac{q}{2-q}, & \text{ when } 1 < q \leq 2 \\
C^{0,\alpha}(\p D), \quad \alpha = \frac{q - 2}{q}, & \text{ when } q >2.
\end{cases}
\]
\end{theorem}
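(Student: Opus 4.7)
The proof unfolds in three stages, with the middle stage carrying most of the technical content. First, since $f \in L^q(D) \subset L^1(D)$, Theorem~\ref{oneonesix} applies directly and supplies the representation $w = \Phi + T(f)$ with $\Phi$ holomorphic on $D$. The task then reduces to showing $\Phi \in \Har$ (so that Theorem~2.2 of \cite{GHJH2} can be invoked) and to analyzing $T(f)_b$ separately.

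For the Hardy-class membership of $\Phi$, I would rearrange as $\Phi = w - T(f)$ and use the subadditivity $|a+b|^p \leq |a|^p + |b|^p$, valid for $0 < p \leq 1$, to write
\[
\int_0^{2\pi} |\Phi(re^{i\theta})|^p\,d\theta \leq \int_0^{2\pi} |w(re^{i\theta})|^p\,d\theta + \int_0^{2\pi} |T(f)(re^{i\theta})|^p\,d\theta.
\]
The first term is controlled by $\|w\|_{\Har}^p$ by hypothesis. For the second, I would split on $q$. If $q > 2$, Theorem~\ref{onepointnineteen} provides a uniform bound for $T(f)$ on $\overline D$ and the integral is trivially $O(1)$. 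If $1 < q \leq 2$, Theorem~\ref{onefourseven} furnishes an estimate $\|T(f)\|_{L^\gamma(\p D(0,r))} \leq C\|f\|_{L^q(D)}$ uniform in $r \in (0,1]$ for some $\gamma > 1$; since $p \leq 1 < \gamma$, Hölder's inequality on $[0,2\pi]$ converts this into the desired $p$-integral bound once the arc-length factor $r$ is bounded away from zero. For the remaining small-$r$ regime, Hardy's convexity theorem says that $r \mapsto \int_0^{2\pi} |\Phi(re^{i\theta})|^p\,d\theta$ is nondecreasing for the holomorphic function $\Phi$, so the bound established for $r \in [1/2,1)$ propagates to all $r \in (0,1)$, yielding $\Phi \in \Har$.

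With $\Phi \in \Har$ in hand, Theorem~2.2 of \cite{GHJH2}, as recalled in the excerpt, immediately produces $\Phi_b = \sum_n c_n a_n$ with $\{c_n\} \in \ell^p$ and $\{a_n\}$ a sequence of $p$-atoms. The error term is handled directly: when $q > 2$, Theorem~\ref{onepointnineteen} makes $T(f)$ continuous on $\overline D$, so $T(f)_b$ exists as the restriction to $\p D$ and lies in $C^{0,\alpha}(\p D)$; when $1 < q \leq 2$, Theorem~\ref{onefoureight} says $T(f)(re^{i\theta}) \to T(f)(e^{i\theta})$ in $L^\gamma(\p D)$ as $r \nearrow 1$, so $T(f)_b \in L^\gamma(\p D)$. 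In both cases the distributional boundary value of $T(f)$ exists, and since both summands on the right of $w = \Phi + T(f)$ admit distributional boundary values, linearity of the pairing $\langle \cdot, \varphi \rangle$ for $\varphi \in C^\infty(\p D)$ gives the existence of $w_b$ and the identity $w_b = \Phi_b + T(f)_b$, completing the decomposition.

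The main obstacle is the uniform-in-$r$ control of $\int_0^{2\pi} |T(f)(re^{i\theta})|^p\,d\theta$ in the regime $1 < q \leq 2$. The natural circle estimate from Theorem~\ref{onefourseven} lives in $L^\gamma$ with $\gamma > 1$, whereas we need a $p$-integral bound with $p \leq 1$; Hölder compensates, but only after the arc-length normalization has been isolated, which is why cleanly separating the small-$r$ regime (resolved via the holomorphy of $\Phi$ and Hardy's convexity) from the near-boundary regime appears to be essential.
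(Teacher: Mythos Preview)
Your proof is correct and follows essentially the same route as the paper: invoke Theorem~\ref{oneonesix} for the representation, use subadditivity of $|\cdot|^p$ to reduce the Hardy-class membership of $\Phi$ to a uniform-in-$r$ bound on $\int_0^{2\pi}|T(f)(re^{i\theta})|^p\,d\theta$, and then combine the atomic decomposition of $\Phi_b$ from \cite{GHJH2} with the boundary behaviour of $T(f)$ supplied by Theorems~\ref{onefourseven}, \ref{onefoureight}, and \ref{onepointnineteen}.

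The only tactical difference lies in how the $L^\gamma$ circle estimate of Theorem~\ref{onefourseven} is converted into an $L^p$ estimate. The paper splits $[0,2\pi]$ into the sets where $|T(f)(re^{i\theta})|\le 1$ and $|T(f)(re^{i\theta})|>1$, bounding the first piece trivially by $2\pi$ and the second by $\int_0^{2\pi}|T(f)(re^{i\theta})|^\gamma\,d\theta$ (since $|x|^p\le |x|^\gamma$ for $|x|>1$ and $p<\gamma$). You instead use H\"older, which is equally valid. Your detour through Hardy's convexity to cover small $r$ is not wrong, but it is unnecessary: in Theorem~\ref{onefourseven} the norm $\|Tf\|_{L^\gamma(\partial D(0,r))}$ is taken with respect to $d\theta$ (not arc length $r\,d\theta$), and the constant is explicitly independent of $r$, so H\"older alone already gives a bound uniform over all $r\in(0,1)$. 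The paper's level-set split avoids this normalization issue altogether.
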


\begin{proof}
By assumption, $\frac{\p w}{\p\z} = f \in L^q(D) \subset L^1(D)$, $1 < q  \leq 2$. So, by Theorem \ref{oneonesix}, $w$ has a representation of the second kind
\begin{equation}\label{repofsecondkind}
w(z) = \Phi(z) + T(f)(z),
\end{equation}
where $\Phi$ is holomorphic in $D$. Observe that
\begin{align*}
&\int_0^{2\pi} |\Phi(re^{i\theta})|^p \,d\theta  \\
&= \int_0^{2\pi} |w(re^{i\theta}) - T(f)(re^{i\theta})|^p \,d\theta  \\
&\leq \int_0^{2\pi} |w(re^{i\theta})|^p \,d\theta + \int_0^{2\pi} | T(f)(re^{i\theta})|^p \,d\theta\\
&\leq ||w||^p_{H^p(D)} + \int_0^{2\pi} | T(f)(re^{i\theta})|^p \,d\theta.
\end{align*}
For each $r$, let $D_{r,*} = \{\theta \in [0,2\pi] : |T(f)(re^{i\theta})| \leq 1 \}$ and $D^{r,*} = \{\theta \in [0,2\pi] : |T(f)(re^{i\theta})| > 1 \}$. By Theorem \ref{onefourseven}, since $f \in L^q(D), 1 < q \leq 2$, it follows that 
\[
\sup_{0 < r < 1}||T(f)||_{L^\gamma(\p D(0,r))} \leq C ||f||_{L^q(D)},
\]
where $\gamma$ satisfies $1 < \gamma < \frac{q}{2-q}$ and $C$ is a constant that does not depend on $f$. So
\begin{align*}
&\int_0^{2\pi} | T(f)(re^{i\theta})|^p \,d\theta \\
&= \int_{D_{r,*}} | T(f)(re^{i\theta})|^p \,d\theta + \int_{D^{r,*}} | T(f)(re^{i\theta})|^p \,d\theta \\
&\leq 2\pi + \int_0^{2\pi} | T(f)(re^{i\theta})|^\gamma \,d\theta \\
&\leq 2\pi + C||f||_{L^q(D)} < \infty.
\end{align*}
Hence, 
\begin{align*}
\int_0^{2\pi} |\Phi(re^{i\theta})|^p \,d\theta 
& \leq  ||w||^p_{H^p(D)} + \int_0^{2\pi} | T(f)(re^{i\theta})|^p \,d\theta \\
&\leq  ||w||^p_{H^p(D)} +  2\pi + C||f||_{L^q(D)} < \infty,
\end{align*}
where the right hand side has no dependence on $r$. Thus, 
\[
\sup_{0 < r < 1} \int_0^{2\pi} |\Phi(re^{i\theta})|^p \,d\theta \leq ||w||^p_{H^p(D)} +  2\pi + C||f||_{L^q(D)} < \infty,
\]
and $\Phi \in H^p(D)$. 

Now, using the representation of the second kind (\ref{repofsecondkind}), for any $\varphi \in C^\infty(\p D)$, we have
\begin{align*}
\left| \langle w_b, \varphi \rangle \right| &= \left| \lim_{r \nearrow 1} \int_0^{2\pi} w(re^{i\theta}) \varphi(\theta) \,d\theta \right|\\
&= \left| \lim_{r \nearrow 1} \int_0^{2\pi} (\Phi(re^{i\theta}) + T(f)(re^{i\theta})) \varphi(\theta) \,d\theta \right| \\
&= \left| \lim_{r \nearrow 1} \int_0^{2\pi} \Phi(re^{i\theta})\,\varphi(\theta) \,d\theta + \lim_{r\nearrow 1} \int_0^{2\pi} T(f)(re^{i\theta})\, \varphi(\theta) \,d\theta \right| \\
&\leq \left| \lim_{r \nearrow 1} \int_0^{2\pi} \Phi(re^{i\theta})\,\varphi(\theta) \,d\theta \right| + \left| \lim_{r\nearrow 1} \int_0^{2\pi} T(f)(re^{i\theta})\, \varphi(\theta) \,d\theta \right| \\
&\leq \left| \lim_{r \nearrow 1} \int_0^{2\pi} \Phi(re^{i\theta})\,\varphi(\theta) \,d\theta \right| + C_\varphi  \lim_{r\nearrow 1} \int_0^{2\pi} \left|T(f)(re^{i\theta})\right| \,d\theta ,
\end{align*}
where $C_\varphi$ is a constant that bounds $|\varphi(\theta)|$ for all $\theta\in [0,2\pi]$. By Theorem 3.1 from \cite{GHJH2}, since $\Phi \in \Har$, it follows that the first summand on the right hand side is finite, and by Theorem \ref{onefourseven}, the second summand is finite. Hence, $w_b$, the boundary value in the sense of distributions of $w$, exists, and 
\[
w_b = \Phi_b + T(f)_b,
\]
where $\Phi_b$ and $T(f)_b$ are the boundary values in the sense of distributions of $\Phi$ and $T(f)$, respectively.

 By Theorem 2.2 from \cite{GHJH2}, there exists $\{c_n\} \in \ell^p$ and a collection $\{a_n\}$ of $p$-atoms such that
\[
\Phi_b = \sum_{n} c_n a_n.
\]
Observe that by Theorem \ref{onefoureight}, if $T(f)_b$ denotes the boundary value in the sense of distributions of $T(f)(z)$ and $T(f)(e^{i\theta})$ denotes the $L^\gamma(\p D)$ boundary value, we have, for any $\varphi \in C^\infty(\p D)$, 
\begin{align*}
\left|\langle T(f)_b, \varphi \rangle - \langle T(f)(\cdot), \varphi \rangle \right|
&= \left| \lim_{r \nearrow 1} \int_0^{2\pi} T(f)(re^{i\theta})\varphi(\theta) \,d\theta - \int_0^{2\pi} T(f)(e^{i\theta}) \varphi(\theta) \,d\theta \right| \\
&\leq C_\varphi \lim_{r\nearrow 1} \int_0^{2\pi} |T(f)(re^{i\theta}) - T(f)(e^{i\theta})| \,d\theta = 0,
\end{align*}
where $C_\varphi$ is constant that bounds $|\varphi(\theta)|$ for all $\theta \in [0,2\pi]$. So, $T(f)_b$ and $T(f)(e^{i\theta})$ agree as distributions. Therefore, 
\[
w_b = \sum_{n} c_n a_n + T(f)_b,
\]
where ${c_n} \in \ell^p$, $\{a_n\}$ are a collection of $p$-atoms, and $T(f)_b \in L^\gamma(\p D)$, $1 < \gamma< \frac{q}{2-q}$. If $q > 2$, then, by Theorem \ref{onepointnineteen}, $T(f) \in C^{0,\alpha}(\overline{D})$, $\alpha = \frac{ q-2}{q}$. Hence, $(T(f))_b \in C^{0,\alpha}(\p D)$. 
\end{proof}

We now consider the specific case of $f = Aw+ B\bar{w}$ and $\frac{1}{2} < p \leq 1$ as an example of the classes $\Hpq$. 

\begin{examp}
Let $A, B \in L^s(D)$, where $s >2$. By Theorem 2.1.5 in \cite{Klim2016}, if $w \in \Hp$, for  $p > 0$, then $w \in L^m(D)$, for all $m$ such that $0 < m < 2p$. So, if $p$ is restricted to $\frac{1}{2} < p \leq 1$, then $2p > 1$. Consider, for some $1 < q \leq 2$, 
\begin{align*}
\iint_D |(Aw+B\bar{w})(z)|^q \,dx\,dy
&\leq 2^{q-1}\iint_D |(Aw)(z)|^q \,dx\,dy + 2^{q-1}\iint_D |(B\bar{w})(z)|^q \,dx\,dy.
\end{align*}
Now, we appeal to H\"older's inequality. We choose the appropriate $\alpha$ and $\beta$ such that $\frac{1}{\alpha} + \frac{1}{\beta} = 1$ with respect to $q$ in the range $1 < q \leq 2$ so that $A,B \in L^{\alpha q}(D)$ and $w \in L^{\beta a}(D)$. For $q$ sufficiently close to 1, we choose $\beta >1$ such that $1 < q < \beta q < 2p$, as $2p > 1$ here. For such a $\beta$, there is a corresponding $\alpha$ and 
\begin{align*}
&2^{q-1}\iint_D |(Aw)(z)|^q \,dx\,dy + 2^{q-1}\iint_D |(B\bar{w})(z)|^q \,dx\,dy \\
&\leq 2^{q-1}\left(\iint_D |A(z)|^{\alpha q} \,dx\,dy\right)^{1/\alpha} \left(\iint_D |w(z)|^{\beta q} \,dx\,dy\right)^{1/\beta} \\
&\quad\quad + 2^{q-1}\left(\iint_D |B(z)|^{\alpha q} \,dx\,dy\right)^{1/\alpha} \left(\iint_D |w(z)|^{\beta q} \,dx\,dy\right)^{1/\beta}.
\end{align*}
Since $\beta q< 2p$, it follows that $w \in L^{\beta q}(D)$. Now, choosing the $A,B \in L^{\alpha q}(D)$, the right hand side of the above inequality is finite, and so $Aw+B\bar{w} \in L^q(D)$. We are assuming $A, B \in L^s(D)$, for some $s>2$, so an improvement is needed only in the case that $\alpha q > s$.
\end{examp}

There are other integral operators that could be used in place of $T(\cdot)$. We describe one such operator and collect some results concerning it that are used in Section \ref{SchwarzSection}. 

\begin{deff}
For $f: \mathbb{C} \to \mathbb{C}$ and $z \in \mathbb{C}$, we denote by $\widetilde{T}(\cdot)$ the integral operator defined by 
\[
\widetilde{T}(f)(z) = -\frac{1}{\pi} \iint_D \left(\frac{f(\zeta)}{\zeta - z} + \frac{z\overline{f(\zeta)}}{1-\overline{\zeta}z} \right)\, d\xi\,d\eta,
\]
whenever the integral is defined.
\end{deff}

Similarly to $T(\cdot)$, $\widetilde{T}(f)$ is defined for any $f \in L^1(D)$, see Theorem 1.4.2 of \cite{KlimBook} (where the result is cited from \cite{Vek}). Also from p.16 of \cite{KlimBook}, the operator $\widetilde{T}(\cdot)$ shares the property of being a right inverse to the Cauchy-Riemann operator, which is well-known for the operator $T(\cdot)$ (see \cite{Vek}), i.e., 
\[
\frac{\p}{\p\z} \widetilde{T}(f) = f,
\]
whenever $\widetilde{T}(f)$ is defined. Note that Theorems \ref{Vekonepointtwentysix} and \ref{onefourseven} also hold for $\widetilde{T}(\cdot)$, see their cited references. This leads to the following alternative to Theorem \ref{main}.

\begin{theorem}\label{altmain}
Every $w \in \Hpq$ can be represented as 
\[
w(z) = \Phi(z) + \widetilde{T}(f)(z),
\]
where $\Phi \in \Har$.
Also, the boundary value in the sense of distributions of $w$, denoted by $w_b$, exists and can be represented as 
\[
w_b = \Phi_b + (\widetilde{T}(f))_b.
\]
where $\Phi_b$ is the boundary value in the sense of distributions of $\Phi$ and $(\widetilde{T}(f))_b$ is the boundary value in the sense of distributions of $\widetilde{T}(f)$. 
\end{theorem}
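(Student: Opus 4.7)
The plan is to mirror the proof of Theorem \ref{main} almost verbatim, substituting $\widetilde{T}$ for $T$ throughout, and leveraging the two facts already recorded in the paragraph preceding the statement: $\widetilde{T}$ is also a right inverse to $\p/\p\z$, and the analogues of Theorems \ref{Vekonepointtwentysix} and \ref{onefourseven} hold for $\widetilde{T}$.

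First I would set $\Phi(z) := w(z) - \widetilde{T}(f)(z)$. Since $\p w/\p\z = f$ and $\p\widetilde{T}(f)/\p\z = f$ in $D$, we get $\p\Phi/\p\z = 0$, so $\Phi$ is holomorphic in $D$ and the representation $w = \Phi + \widetilde{T}(f)$ holds. To show $\Phi \in \Har$, I would run the triangle-inequality argument from the proof of Theorem \ref{main}: from the pointwise bound $|\Phi(re^{i\theta})|^p \leq |w(re^{i\theta})|^p + |\widetilde{T}(f)(re^{i\theta})|^p$ one passes to the circle-integral estimate and splits $\int_0^{2\pi}|\widetilde{T}(f)(re^{i\theta})|^p\,d\theta$ over the subsets of $[0,2\pi]$ where $|\widetilde{T}(f)(re^{i\theta})|$ is, respectively, at most $1$ or greater than $1$. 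The first piece is dominated by $2\pi$, and on the second piece $|\widetilde{T}(f)|^p \leq |\widetilde{T}(f)|^\gamma$ for $\gamma \in (1, q/(2-q))$, so the $\widetilde{T}$-version of Theorem \ref{onefourseven} controls it uniformly in $r$ by a constant times $||f||_{L^q(D)}$. Taking the supremum in $r$ yields $\Phi \in \Har$.

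For the boundary value assertion, since $\Phi \in \Har$ the existence of $\Phi_b$ as a distribution on $\p D$ follows from Theorem 3.1 of \cite{GHJH2}. To obtain $(\widetilde{T}(f))_b$, I would invoke the $\widetilde{T}$-analogue of Theorem \ref{onefoureight}: for every $\varphi \in C^\infty(\p D)$, the difference between $\int_0^{2\pi} \widetilde{T}(f)(re^{i\theta})\varphi(\theta)\,d\theta$ and the integral against the $L^\gamma(\p D)$ boundary trace of $\widetilde{T}(f)$ is dominated by $C_\varphi \, ||\widetilde{T}(f)(re^{i\cdot}) - \widetilde{T}(f)(e^{i\cdot})||_{L^1([0,2\pi])}$, which vanishes as $r\nearrow 1$. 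Linearity of the distributional pairing then produces $w_b = \Phi_b + (\widetilde{T}(f))_b$.

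The main obstacle, and the only step not copied directly from the proof of Theorem \ref{main}, is justifying this $L^\gamma$-boundary convergence for $\widetilde{T}$ rather than for $T$. The excerpt records the analogues of Theorems \ref{Vekonepointtwentysix} and \ref{onefourseven} for $\widetilde{T}$ but is silent on Theorem \ref{onefoureight}; however, the extra kernel term $z\overline{f(\zeta)}/(1-\overline{\zeta}z)$ has kernel smooth on $\overline{D}\times D$, so the integral it defines is continuous in $z$ up to $\p D$. This reduces the required convergence for $\widetilde{T}(f)$ to the already-known convergence for $T(f)$, after which the remainder of the argument is a direct transplantation of the proof of Theorem \ref{main}.
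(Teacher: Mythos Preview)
Your approach mirrors the paper's proof exactly: the paper also sets $\Phi:=w-\widetilde T(f)$, observes it is holomorphic since $\widetilde T$ is a right inverse to $\p/\p\z$, and then simply refers back to the size-estimate and boundary-value arguments of Theorem \ref{main} verbatim, using the remark that Theorems \ref{Vekonepointtwentysix} and \ref{onefourseven} carry over to $\widetilde T$. You actually go further than the paper in flagging that an analogue of Theorem \ref{onefoureight} is also needed for the boundary-value step, a point the paper leaves implicit.

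Your proposed justification of that step, however, does not quite work. The kernel $z/(1-\bar\zeta z)$ is indeed smooth on $\overline D\times D$, but it is not \emph{bounded} there: for $z\in\p D$ one has $|1-\bar\zeta z|=|z-\zeta|$, so the kernel blows up as $\zeta\to z$, and smoothness alone does not force the integral to be continuous up to $\p D$. Concretely, for $1<q<2$ take $f(\zeta)=(1-\zeta)^{-1}\in L^q(D)$; then the extra integral $\iint_D z\overline{f(\zeta)}/(1-\bar\zeta z)\,d\xi\,d\eta$ diverges at $z=1$. A clean way to close the gap is to note instead that the extra piece $\widetilde T(f)-T(f)$ is \emph{holomorphic} in $z$ on $D$ (the kernel $z/(1-\bar\zeta z)$ is holomorphic in $z$), and by the $\widetilde T$- and $T$-versions of Theorem \ref{onefourseven} it has uniformly bounded $L^\gamma(\p D(0,r))$ norms; hence it lies in $H^\gamma(D)$ with $\gamma>1$, and the standard $L^\gamma$ boundary convergence for holomorphic $H^\gamma$ functions supplies the needed analogue of Theorem \ref{onefoureight}. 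With this correction your argument is complete and essentially identical to the paper's.
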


\begin{proof}
Observe that 
\[
\frac{\p}{\p\z} (w - \widetilde{T}(f)) = 0,
\]
so 
\[
\Phi := w - \widetilde{T}(f)
\]
is holomorphic. The justification that this $\Phi \in \Har$ is precisely the same size estimate argument as the analogous statement in the proof of Theorem \ref{main}, and the justification of the statement about boundary values in the sense of distributions is similarly the same argument as in the proof of the analogous result in Theorem \ref{main}.
\end{proof}

The reason for establishing the alternative representation of the second kind is to appeal to the fact that
\[
\frac{f(\zeta)}{\zeta - z} + \frac{z\overline{f(\zeta)}}{1-\overline{\zeta}z} = \frac{f(\zeta)}{\zeta - z} - \overline{\left( \frac{f(\zeta)}{\zeta - z} \right)} = 2i \im\left\{\frac{f(\zeta)}{\zeta - z} \right\}
\]
for $|z| = 1$, so 
\[
\re\{(\widetilde{T}(f))_b\} = 0.
\]
In Section \ref{SchwarzSection}, this representation is used to construct solutions for the boundary value problem considered there.

\section{Applications}

\subsection{Hilbert Transform}

We work to show an application of Theorem \ref{main}.
As a combination of Theorem 6.2 and Corollary 6.2 from \cite{GHJH2}, we have the following.

\begin{deff}
Let $H^p_{at}(\p D)$ denote the $f \in \dc$ where there exists $\{c_n\}\in \ell^p$ and a collection of $p$-atoms $\{a_n\}$ such that 
\[
f = \sum_{n} c_n a_n
\]
as distributions.
\end{deff}

\begin{deff}
For any $f \in L^1(\p D)$, the Hilbert transform $H(f)$ is given by 
\[
H(f)(\theta) = \lim_{\epsilon \to 0} \frac{1}{\pi} \int_{\epsilon \leq |t|\leq \pi} \frac{f(\theta - t)}{2\tan(t/2)} \,dt.
\]
\end{deff}

\begin{prop}\label{hilbatomiccont}
The Hilbert transform applies $H^p_{at}(\p D)$ continuously to $H^p_{at}(\p D)$.
\end{prop}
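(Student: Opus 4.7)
The plan is to leverage the characterization from Theorem 6.2 and Corollary 6.2 of \cite{GHJH2}, which identifies $H^p_{at}(\p D)$ with the space of boundary distributions of functions in the holomorphic Hardy space $\Har$. Given $f \in H^p_{at}(\p D)$, I would first lift $f$ to some $\Phi \in \Har$ with $\Phi_b = f$. The strategy is then to show that $H(f)$ is itself the boundary distribution of an $\Har$ function plus a constant; reversing the equivalence will place $H(f)$ back in $H^p_{at}(\p D)$, with the additive constant absorbed by the second clause in the definition of a $p$-atom.

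The core computation is the Fourier-multiplier identity for the Hilbert transform on $\p D$: it acts as multiplication by $-i\,\sgn(n)$ on $e^{in\theta}$. Writing $\Phi(z) = \sum_{n \geq 0} \alpha_n z^n$, the boundary distribution $\Phi_b$ has Fourier expansion $\sum_{n \geq 0} \alpha_n e^{in\theta}$, so formally
\[
H(\Phi_b) = -i \sum_{n \geq 1} \alpha_n e^{in\theta} = -i\,\Phi_b + i\,\Phi(0).
\]
Hence $H(f) = (-i\Phi)_b + i\Phi(0)$. Since $-i\Phi \in \Har$ with $\|-i\Phi\|_{\Har} = \|\Phi\|_{\Har}$, its boundary distribution lies in $H^p_{at}(\p D)$ by the equivalence, and $i\Phi(0)$ is (a bounded multiple of) a constant atom; thus $H(f) \in H^p_{at}(\p D)$.

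To obtain continuity, I would track constants in the atomic decomposition carefully. The quantitative form of Theorem 6.2 of \cite{GHJH2} produces $\Phi$ with $\|\Phi\|_{\Har} \lesssim \|f\|_{H^p_{at}(\p D)}$; the classical pointwise bound for $\Har$ functions yields $|\Phi(0)| \lesssim \|\Phi\|_{\Har}$; and applying Theorem 2.2 of \cite{GHJH2} to $-i\Phi$ then produces an atomic decomposition of $H(f)$ whose $\ell^p$ coefficient norm is controlled by $\|f\|_{H^p_{at}(\p D)}$.

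The main obstacle is making the Fourier-multiplier identity rigorous at the distributional level, since for $p < 1$ the boundary object $\Phi_b$ need not be integrable. I would handle this by approximating with the dilates $\Phi_r(e^{i\theta}) := \Phi(re^{i\theta})$, which are smooth on $\p D$; applying the classical multiplier identity in $L^2(\p D)$ for each fixed $r < 1$; and then passing to the distributional limit as $r \nearrow 1$, using the existence of $\Phi_b$ as a distribution together with the continuity of $H$ on $\dc$ (obtained by duality from its continuity on $C^\infty(\p D)$).
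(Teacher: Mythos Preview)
The paper does not give a proof of this proposition; it is recorded as a direct consequence of Theorem~6.2 and Corollary~6.2 of \cite{GHJH2}, with no further argument. So there is no in-paper proof to compare against, and your proposal is already more detailed than what the paper itself supplies.

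That said, your first step has a genuine gap. Theorem~6.2 of \cite{GHJH2}, as quoted in the paper, does \emph{not} identify $H^p_{at}(\p D)$ with the space $(\Har)_b$ of boundary distributions of holomorphic Hardy functions; it identifies $H^p_{at}(\p D)$ with $\re(\Har)_b + i\,\re(\Har)_b$. These are different: a single nonconstant real-valued atom $a$ lies in $H^p_{at}(\p D)$ but can never equal $\Phi_b$ for any holomorphic $\Phi$, since a holomorphic function with real boundary distribution is constant. Thus the lift ``$\Phi \in \Har$ with $\Phi_b = f$'' that you posit need not exist.

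The repair is straightforward and your underlying mechanism survives. Using the correct characterization, write $f = g_1 + i g_2$ with $g_j = \re((\Phi_j)_b)$ for some $\Phi_j \in \Har$. Your Fourier-multiplier computation, applied to $\Phi_j$ and $\overline{\Phi_j}$ separately, yields $H(g_j) = \im((\Phi_j)_b) - \im(\Phi_j(0)) = \re((-i\Phi_j)_b) - \im(\Phi_j(0))$, which lies in $\re(\Har)_b$ plus a constant atom. Reassembling gives $H(f) \in H^p_{at}(\p D)$, and the norm-tracking for continuity goes through exactly as you outlined, now applied to the pair $(\Phi_1,\Phi_2)$ and the two constants $\im(\Phi_j(0))$.
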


From \cite{Rep} p.137, we have the following theorem attributed to Marcel Riesz.

\begin{theorem}\label{hilblpcont}
Let $u \in L^p(\p D)$, $1 < p < \infty$. The Hilbert transform of $u$, denoted by $H(u)$, is in $L^p(\p D)$, and 
\[
||H(u)||_{L^p(\p D)} \leq C ||u||_{L^p(\p D)},
\]
where $C$ is a constant that depends only on $p$. 
\end{theorem}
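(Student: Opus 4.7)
The plan is to follow the classical complex-analytic argument of M.\ Riesz, exploiting that the Hilbert transform on $\p D$ is the boundary version of passing from a real harmonic function on $D$ to its harmonic conjugate normalized at the origin. Given real-valued $u \in L^p(\p D)$, set $v = H(u)$ and let $F(z) := (Pu)(z) + i(Pv)(z)$, where $P$ denotes the Poisson integral, so that $F$ is holomorphic in $D$. After subtracting the mean of $u$ (whose Hilbert transform vanishes and whose $L^p$ norm is controlled by $\|u\|_{L^p}$), we may assume $F(0) = 0$. I would prove the estimate first for $p=2$, then for every even integer $p = 2k$, and finally extend to all $1 < p < \infty$ by Riesz--Thorin interpolation and duality.

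The $p=2$ case is immediate from Plancherel, since $H$ acts on Fourier modes by $\hat{u}(n) \mapsto -i\,\sgn(n)\hat{u}(n)$, a bounded multiplier of modulus one. For $p = 2k$, apply the mean value identity to the holomorphic function $F^{2k}$ on circles $\p D(0,r)$ and pass to the boundary (justified via subharmonicity of $|F|^{2k}$ and uniform integrability) to obtain $\int_0^{2\pi} F(e^{i\theta})^{2k}\,d\theta = 0$. Taking real parts and expanding $(u+iv)^{2k}$ via the binomial theorem produces
\[
\sum_{m=0}^{k}(-1)^{m}\binom{2k}{2m}\int_0^{2\pi} u^{2k-2m}v^{2m}\,d\theta = 0,
\]
which isolates $\int v^{2k}\,d\theta$ up to sign. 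Writing $X = \|v\|_{L^{2k}}$ and $Y = \|u\|_{L^{2k}}$, H\"older's inequality applied to each of the remaining mixed integrals with conjugate exponents $\bigl(\tfrac{2k}{2k-2m},\tfrac{2k}{2m}\bigr)$ yields
\[
X^{2k} \leq Y^{2k} + \sum_{m=1}^{k-1} \binom{2k}{2m}\, Y^{2k-2m}\, X^{2m}.
\]
The right-hand side is a polynomial of degree $2k-2$ in $X$, so dividing through by $Y^{2k}$ and setting $t = X/Y$ gives $t \leq C_k$, where $C_k$ is the largest positive root of the associated monic polynomial equation. Hence $\|H(u)\|_{L^{2k}} \leq C_k \|u\|_{L^{2k}}$.

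To reach the remaining exponents, I would apply Riesz--Thorin interpolation between the endpoints $L^2$ and $L^{2k}$ for any $2k > p$, covering all $p \in [2,\infty)$. The range $1 < p < 2$ then follows by duality, since $H^{*} = -H$ on the dual space $L^{p'}$ with $p' > 2$; complex-valued $u$ are handled by splitting into real and imaginary parts. The principal obstacle is the polynomial-inequality step in the even-integer case, where one must verify that the leading term $X^{2k}$ genuinely dominates the mixed contributions to extract the bound $X \leq C_k Y$; this is exactly the point at which the dimension inequality $\deg_{X}(\text{RHS}) < 2k$ is essential, and it is the reason the argument succeeds for even integers and must then be interpolated rather than proved directly for all $p$.
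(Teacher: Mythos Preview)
The paper does not give its own proof of this theorem; it simply quotes the result from the literature (attributed to M.\ Riesz, citing \cite{Rep}, p.\,137) and uses it as a black box. So there is no argument in the paper to compare your proposal against.

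Your outline is the classical Riesz proof and is essentially correct. One organizational point worth tightening: you begin by setting $v = H(u)$ and forming $F = Pu + iPv$, but at that stage you do not yet know that $H(u)$ exists in $L^{2k}$ (that is exactly what you are proving). The standard fix is to define $F$ intrinsically as the holomorphic function on $D$ with $\re F = Pu$ and $\im F(0) = 0$, carry out the mean-value and binomial computation on circles $|z| = r < 1$ where everything is smooth, obtain the bound $\|\im F(r\,\cdot)\|_{L^{2k}(\p D)} \le C_k \|\re F(r\,\cdot)\|_{L^{2k}(\p D)} \le C_k \|u\|_{L^{2k}(\p D)}$ uniformly in $r$, and only then pass to the boundary via Fatou's lemma to identify the limit with $H(u)$. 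You gesture at this (``on circles $\p D(0,r)$ and pass to the boundary''), but the order in which you introduce $v$ obscures it. Otherwise the polynomial-inequality step, the interpolation between $L^2$ and $L^{2k}$, and the duality $H^\ast = -H$ to reach $1 < p < 2$ are all sound; just remember that the even-integer argument as written is for real-valued $u$, so extend to complex $u$ by real/imaginary parts \emph{before} invoking Riesz--Thorin.
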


Let us define the space that we will now be considering.

\begin{deff}
Denote by $(\Hpq)_b$ the set of distributions $w_b \in \mathcal{D}'(\p D)$ that are boundary values in the sense of distributions of $w \in \Hpq$. 
\end{deff}

Let us define a quasi-norm on our classes $(\Hpq)_b$.

\begin{deff}
We define the quasi-norm $||\cdot||$ by
\[
||w_b|| := ||\sum_{n}c_n a_n ||_{at} + ||T(f)_b||_{L^\gamma(\p D)},
\]
for each $w_b \in (\Hpq)_b$, $0 < p \leq 1$, where $f \in L^q(D), q>1$, and 
\[
w_b = \sum_{n} c_n a_n + T(f)_b
\]
is the representation from Theorem \ref{main}, $\gamma$ is the same number as in the statement of Theorem \ref{onefourseven}, and
\[
||\sum_{n} c_n a_n||_{at} := \left( \inf \sum_{n} |c_n|^p \right)^{1/p},
\]
where the infimum is taken over the set of all sequences of coefficients $\{c_n\}$ for equivalent atomic decompositions. 
\end{deff}

Note that by its definition, $||\cdot||$ is a quasi-norm, and we take the statements in Proposition \ref{hilbatomiccont} and Theorem \ref{hilblpcont} to be continuity of the Hilbert transform as an operator on the corresponding quasi-normed classes (of course since $\gamma>1$ here Theorem \ref{hilblpcont} is in terms of an actual norm).

As a consequence of Proposition \ref{hilbatomiccont}, Theorem \ref{hilblpcont}, and the representation we have found for boundary values in the sense of distributions of functions in $\Hpq$ when $0 < p \leq 1$ and $f \in L^q(D)$, $q > 1$, from Theorem \ref{main}, we have the following result concerning the Hilbert transform.

\begin{theorem}
For $\Hpq$, where $0 < p \leq 1$ and $f \in L^q(D)$, $q>1$, the Hilbert transform is a continuous operator on the space $(\Hpq)_b$.
\end{theorem}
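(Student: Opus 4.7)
The plan is to combine the decomposition of Theorem \ref{main} with the two linear continuity results already recorded, namely Proposition \ref{hilbatomiccont} for atomic distributions and Theorem \ref{hilblpcont} (M.~Riesz) for $L^\gamma$ functions. First I would take $w_b \in (\Hpq)_b$ and invoke Theorem \ref{main} to write
\[
w_b = \sum_n c_n a_n + T(f)_b,
\]
with $\sum_n c_n a_n \in H^p_{at}(\p D)$ and $T(f)_b \in L^\gamma(\p D)$ for some $\gamma>1$; in the range $q>2$, Theorem \ref{onepointnineteen} provides the stronger conclusion $T(f)_b \in C^{0,\alpha}(\p D)$, which embeds continuously into $L^\gamma(\p D)$ for any $\gamma>1$, so the two cases can be treated uniformly. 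Since $H$ acts linearly on the relevant space of distributions,
\[
H(w_b) = H\Bigl(\sum_n c_n a_n\Bigr) + H(T(f)_b).
\]

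Next I would bound each summand using the two continuity theorems separately. Proposition \ref{hilbatomiccont} gives
\[
\Bigl\| H\Bigl(\sum_n c_n a_n\Bigr) \Bigr\|_{at} \le C_1 \Bigl\| \sum_n c_n a_n \Bigr\|_{at},
\]
and Theorem \ref{hilblpcont}, available because $\gamma>1$, gives
\[
\| H(T(f)_b) \|_{L^\gamma(\p D)} \le C_2 \| T(f)_b \|_{L^\gamma(\p D)}.
\]
Summing these bounds and unfolding the definition of the quasi-norm on $(\Hpq)_b$ yields the desired estimate
\[
\| H(w_b) \| \le \max(C_1, C_2) \, \| w_b \|.
\]

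To close the argument I must verify that $H(w_b)$ itself belongs to $(\Hpq)_b$, i.e., is the distributional boundary value of some function in $\Hpq$. The atomic piece $H(\sum_n c_n a_n)$ is the boundary value of a holomorphic $\Phi' \in \Har \subset \Hpq$ by the biconditional form of the atomic-decomposition theorem in \cite{GHJH2}. For the $L^\gamma$ piece $H(T(f)_b)$, the natural candidate is an extension into $D$ obtained from the Vekua-type operator $\widetilde{T}(\cdot)$ featured in Theorem \ref{altmain} or from a Schwarz-type construction of the flavor developed in Section \ref{SchwarzSection}, with boundary datum $H(T(f)_b)$. Producing such a canonical $\Hpq$-extension of the $L^\gamma$ component is the main technical point of the argument; the quasi-norm bound itself is then essentially bookkeeping on top of Theorem \ref{main}, Proposition \ref{hilbatomiccont}, and Theorem \ref{hilblpcont}.
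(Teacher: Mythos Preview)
Your core argument---invoking Theorem \ref{main} to split $w_b$ into an atomic piece and an $L^\gamma$ piece, applying $H$ linearly, and then bounding each summand via Proposition \ref{hilbatomiccont} and Theorem \ref{hilblpcont} separately---is exactly what the paper does. The chain of inequalities you wrote is essentially the paper's displayed computation, with $C=\max\{C_1,C_2\}$ playing the role of the paper's $\max\{C_p,C_\gamma\}$.

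Where you diverge is in your final paragraph, where you worry about showing $H(w_b)\in(\Hpq)_b$ and propose constructing an $\Hpq$-extension of $H(T(f)_b)$ via $\widetilde{T}$ or a Schwarz-type argument. The paper does \emph{not} carry out this step at all. In the paper's proof the equality
\[
\|H(w_b)\| \;=\; \Bigl\|H\Bigl(\sum_n c_n a_n\Bigr)\Bigr\|_{at} + \|H(T(f)_b)\|_{L^\gamma(\p D)}
\]
is simply asserted from the definition of the quasi-norm, treating the decomposition $H(\sum_n c_n a_n)+H(T(f)_b)$ as the one that computes $\|H(w_b)\|$. In effect the paper is working in the ambient space $H^p_{at}(\p D)+L^\gamma(\p D)$, and indeed the Corollary immediately following the theorem makes this explicit. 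So your extra closure step is not part of the paper's argument; you are being more scrupulous than the source. If you want to match the paper, you may drop that paragraph; if you want a sharper statement, the honest formulation is the one in the Corollary rather than a claim that $H$ maps $(\Hpq)_b$ into itself.
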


\begin{proof}
Let $w \in \Hpq$, for $0 < p \leq 1$ and $f \in L^q(D)$ where $q>1$. By Theorem \ref{main}, this $w$ has a boundary value in the sense of distributions 
\[
w_b = \sum_{n} c_n a_n + T(f)_b,
\]
where $\{c_n\} \in \ell^p$, $\{a_n\}$ is a collection of $p$-atoms and $T(f)_b \in L^\gamma(\p D)$ for $\gamma$ that satisfies $1 < \gamma < \frac{q}{2-q}$. By Proposition \ref{hilbatomiccont}, the Hilbert transform is continuous from $H^p_{at}(\p D)$ to itself. By Theorem \ref{hilblpcont}, the Hilbert transform is continuous from $L^\gamma(\p D)$ to $L^\gamma(\p D)$, so long as $1 < \gamma< \infty$. Since $w_b  = \sum_{n} c_n a_n + T(f)_b$ and the Hilbert transform is a linear operator, it follows that, denoting the Hilbert transform by $H$, we have
\begin{align*}
||H(w_b)||  
&= ||H(\sum_{n} c_n a_n + T(f)_b) || \\
&= ||H(\sum_{n} c_n a_n) + H(T(f)_b)|| \\
&= ||H(\sum_{n} c_n a_n)||_{at} + ||H(T(f)_b)||_{L^\gamma(\p D)} \\
&\leq C_p||\sum_{n} c_n a_n||_{at} + C_\gamma||T(f)_b||_{L^\gamma(\p D)} \\
&\leq C\left(||\sum_{n} c_n a_n||_{at} + ||T(f)_b||_{L^\gamma(\p D)} \right) \\
&= C\left(||\sum_{n} c_n a_n + T(f)_b|| \right)\\
&= C||w_b||,
\end{align*}
where $C_p$ and $C_\gamma$ are constants that depend only on $p$ and $\gamma$ respectively and $C:= \max\{C_p, C_\gamma\}$. 
\end{proof}

Denote by $H^p_{at}(\p D) + L^q(\p D)$ the set 
\[
H^p_{at}(\p D) + L^q(\p D) := \{f = g + h : g \in H^p_{at}(\p D) \quad\text{and}\quad h \in L^q(\p D)\},
\]
for some $q$ satisfying $1 < q < \infty$ with the quasi-norm $||\cdot||$ defined by
\[
||f|| = ||g||_{at} + ||h||_{L^q(\p D)}.
\] 

\begin{corr}
The Hilbert transform maps $H^p_{at}(\p D) + L^q(\p D)$ continuously to  $H^p_{at}(\p D) + L^q(\p D)$. 
\end{corr}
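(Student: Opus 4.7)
The plan is to follow the same template as the preceding theorem, but now starting from an abstract element $f\in H^p_{at}(\p D)+L^q(\p D)$ rather than from a boundary value $w_b$. First I would fix any decomposition $f=g+h$ with $g\in H^p_{at}(\p D)$ and $h\in L^q(\p D)$. By linearity of the Hilbert transform, $H(f)=H(g)+H(h)$, and these two summands can be analyzed separately on their respective spaces.

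Next I would apply Proposition \ref{hilbatomiccont} to the first summand, which gives $H(g)\in H^p_{at}(\p D)$ together with a bound $\|H(g)\|_{at}\leq C_p\|g\|_{at}$, where $C_p$ depends only on $p$. I would then apply Theorem \ref{hilblpcont} to the second summand, using that $q>1$, to obtain $H(h)\in L^q(\p D)$ with $\|H(h)\|_{L^q(\p D)}\leq C_q\|h\|_{L^q(\p D)}$. Combining these two facts, $H(f)=H(g)+H(h)$ exhibits $H(f)$ as a sum of an element of $H^p_{at}(\p D)$ and an element of $L^q(\p D)$, so $H(f)\in H^p_{at}(\p D)+L^q(\p D)$. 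Setting $C:=\max\{C_p,C_q\}$, the quasi-norm of the image satisfies
\[
\|H(f)\|\leq \|H(g)\|_{at}+\|H(h)\|_{L^q(\p D)}\leq C\bigl(\|g\|_{at}+\|h\|_{L^q(\p D)}\bigr)=C\|f\|,
\]
which is the desired continuity estimate for this particular decomposition.

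The only subtle point, and the only place that really requires any care, is that the decomposition $f=g+h$ is not unique, so strictly speaking the natural quasi-norm on $H^p_{at}(\p D)+L^q(\p D)$ should be interpreted as an infimum over admissible decompositions (mirroring the infimum already built into $\|\cdot\|_{at}$). Since the bound $\|H(f)\|\leq C\|f\|$ derived above holds with the same constant $C$ for every decomposition of $f$, taking the infimum over decompositions on the right-hand side gives a uniform estimate, and the analogous infimum on the left-hand side can only decrease the norm, so the continuity estimate passes to the quotient. This is the only step that goes beyond the purely formal imitation of the previous theorem, and I would expect it to be the main (though still very mild) obstacle, since nothing in the definition as written in the excerpt makes the choice of decomposition explicit.
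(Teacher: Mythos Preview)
Your proposal is correct and is exactly the argument the paper has in mind; in fact the paper states this corollary without proof, treating it as an immediate consequence of the preceding theorem together with Proposition~\ref{hilbatomiccont} and Theorem~\ref{hilblpcont}. Your remark about passing to the infimum over decompositions is actually more careful than the paper, which leaves that point implicit in its definition of the quasi-norm.
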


\subsection{Schwarz Boundary Value Problem}\label{SchwarzSection}

First, we recall the real-variable Hardy spaces $\mathcal{H}^p( D)$ analogous to our setup, for more information see \cite{Col} and \cite{BigStein}.

\begin{deff}
We define $\mathcal{H}^p( D)$, $0 < p \leq 1$, to be the space of distributions $g \in \mathcal{D}'(\p D)$ such that 
\[
\int_0^{2\pi} \left|\sup_{0 \leq r < 1} \left| \frac{1}{2\pi} \langle g, P_r(\theta - \cdot) \rangle \right|\right|^p\, d\theta < \infty,
\]
where 
\[
P_r(\theta) = \frac{1-r^2}{1- 2r\cos(\theta) + r^2}
\]
is the Poisson kernel on $D$. 
\end{deff}

Next, we identify a specific subset of $\mathcal{H}^p(D)$. 

\begin{deff}
We define $\re{(H^p(D))_b}$ to be the space of real parts of the boundary values in the sense of distributions of functions in $H^p(D)$, i.e., $h \in \re{(H^p(D))_b}$ if there exists $H \in H^p(D)$ such that $\re{H_b} = h$. 
\end{deff}

We introduce the following two results. 

\begin{theorem}[Theorem 6.2 \cite{GHJH2}]\label{GHJH2sixpointtwo}
Let $h \in \mathcal{D}'(\p D)$. The following are equivalent:
\begin{enumerate}
\item $h \in \re{(H^p(D))_b} + i \re{(H^p(D))_b} := \{ g_1 + ig_2: g_1,g_2 \in \re{(H^p(D))_b}\}$,
\item $\langle h, P_r(\theta - \cdot) + i Q_r(\theta - \cdot) \rangle \in H^p(D)$, where 
\[
Q_r(\theta) = \frac{2r\sin(\theta)}{1-2r\cos(\theta) + r^2}
\]
is the conjugate Poisson kernel, and 
\item there exists a sequence of $p$-atoms $\{a_n\}$ and a sequence $\{c_n\} \in \ell^p$ such that 
\[
h = \sum_{n} c_n a_n.
\]
\end{enumerate}
\end{theorem}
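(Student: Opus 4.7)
The plan is to prove the equivalences in the cyclic order $(3)\Rightarrow(1)\Rightarrow(2)\Rightarrow(3)$, combining the atomic decomposition for the holomorphic Hardy space given in Theorem~2.2 of \cite{GHJH2} with the classical Schwarz integral formula and Poisson-type convergence identities on the disk.

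For $(3)\Rightarrow(1)$, I would split $h = h_1 + ih_2$ into its real and imaginary components. Because $\re a_n$ and $\im a_n$ remain $p$-atoms (same supports, same size bounds, and real-weighted moments preserved), each $h_j$ inherits a real atomic decomposition. For each $j$ I would then form the Schwarz integral
\[
H_j(z) \;=\; \frac{1}{2\pi}\sum_n c^{(j)}_n \int_0^{2\pi} a^{(j)}_n(\tau)\,\frac{e^{i\tau}+z}{e^{i\tau}-z}\,d\tau
\]
and verify atom by atom, via the standard $H^p$-atom estimate, that the Schwarz integral of a single $p$-atom lies in $H^p(D)$ with norm depending only on $p$. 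The quasi-triangle inequality $|\sum_n u_n|^p \le \sum_n |u_n|^p$ valid for $0<p\le 1$ then yields $H_j \in H^p(D)$; Poisson theory gives $\re(H_j)_b = h_j$, so that $h = \re(H_1)_b + i\re(H_2)_b$.

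For $(1)\Rightarrow(2)$, I would invoke the Schwarz recovery formula: for any $G \in H^p(D)$,
\[
G(re^{i\theta}) \;=\; \frac{1}{2\pi}\bigl\langle \re G_b,\;P_r(\theta-\cdot)+iQ_r(\theta-\cdot)\bigr\rangle + i\,\im G(0).
\]
Applying this with $G=H_1$ and $G=H_2$ and combining with coefficients $1$ and $i$ gives $\langle h, P_r + iQ_r\rangle = 2\pi(H_1+iH_2) + c$ for some constant $c$. The estimate $|H_1+iH_2|^p \le |H_1|^p + |H_2|^p$ (for $0<p\le 1$) places $H_1+iH_2$ in $H^p(D)$, and constants trivially lie in $H^p(D)$, establishing (2).

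For $(2)\Rightarrow(3)$, let $W(z) := \langle h, P_r(\theta-\cdot) + iQ_r(\theta-\cdot)\rangle$, a holomorphic function in $H^p(D)$ by hypothesis. A Fubini computation against $\varphi \in C^\infty(\p D)$, using $\frac{1}{2\pi} P_r*\varphi \to \varphi$ and $\frac{1}{2\pi} Q_r*\varphi \to H\varphi$ in $C^\infty$, identifies $W_b = 2\pi(h + iHh)$, where $H$ is the Hilbert transform. The main obstacle is unwinding this analytic projection to recover an atomic decomposition of $h$ itself. I would resolve this by appealing to the real-variable characterization of $\mathcal H^p(D)$ due to Colzani \cite{Col}: a distribution has an atomic decomposition if and only if the radial maximal function of its Poisson integral lies in $L^p(\p D)$. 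Splitting $h = h_1 + ih_2$, the Poisson integrals of $h_1 - Hh_2$ and $h_2 + Hh_1$ coincide with the real and imaginary parts of $W$; their radial maximal functions are dominated by that of $|W|$, which lies in $L^p$ by the standard $H^p(D)$-maximal estimate. Colzani's theorem then produces atomic decompositions of $h_1 - Hh_2$ and $h_2 + Hh_1$, after which applying $H$ (which preserves $H^p_{at}$ by Proposition~\ref{hilbatomiccont}) and using the appropriate Hilbert-transform inversion identity modulo constants allow me to solve for $h_1$ and $h_2$ as atomic sums, yielding (3).
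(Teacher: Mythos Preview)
The paper does not contain a proof of this statement. Theorem~\ref{GHJH2sixpointtwo} is quoted verbatim from Hoepfner and Hounie \cite{GHJH2} (their Theorem~6.2) and is used here only as a black box, so there is no argument in the present paper against which to compare your proposal.

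That said, your sketch follows the natural line one would expect from the ingredients the paper cites: atomic estimates for Schwarz integrals give $(3)\Rightarrow(1)$, the Schwarz recovery formula gives $(1)\Rightarrow(2)$, and Colzani's maximal characterization closes the loop $(2)\Rightarrow(3)$. One point in the $(2)\Rightarrow(3)$ step deserves care. You identify $\re W$ and $\im W$ as Poisson extensions of $h_1 - Hh_2$ and $h_2 + Hh_1$, which requires knowing that $Q_r * h_j$ coincides with $P_r * (Hh_j)$ for a general distribution $h_j$; this holds, but only modulo a constant (the harmonic conjugate normalized at the origin), and one must also check that $Hh_j$ is a priori well defined as a distribution before any Hardy-space membership is known. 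Both issues are minor---the constant is absorbed harmlessly and $H$ acts on $\mathcal{D}'(\partial D)$ by duality since it preserves $C^\infty(\partial D)$---but they should be stated explicitly rather than folded into the word ``Fubini.'' The final inversion step, recovering $h_1,h_2$ from $h_1 - Hh_2$ and $h_2 + Hh_1$ using $H^2 = -\mathrm{Id}$ on the mean-zero part, is correct and clean.
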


\begin{prop}[Proposition 2 \cite{Col}]\label{Colzanitwo}
For every sequence $\{a_n\}$ of $p$-atoms and sequence $\{c_n\} \in \ell^p$, there is a distribution $h \in \mathcal{H}^p(D)$ such that 
\[
h = \sum_n c_n a_n.
\] 
\end{prop}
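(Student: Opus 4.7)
The strategy is the classical ``atomic-implies-maximal'' half of the real-variable Hardy space theory, adapted to the circle. The plan has three parts: (i) verify that the series $\sum_n c_n a_n$ converges in $\mathcal{D}'(\p D)$; (ii) use $p$-subadditivity to reduce the maximal estimate to a single atom; and (iii) prove a uniform bound on the Poisson radial maximal function of one $p$-atom.

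For step (i), fix $\varphi \in C^\infty(\p D)$ and set $N = \lfloor 1/p - 1 \rfloor$. For a $p$-atom $a$ supported in an arc $J$ of length $\delta$ centered at $\theta_0$, the vanishing-moment conditions allow $\varphi$ to be replaced by its Taylor remainder of order $N$ at $\theta_0$, and combining this with the size bound $|a| \leq \delta^{-1/p}$ yields
\[
|\langle a, \varphi\rangle| \lesssim \|\varphi\|_{C^{N+1}(\p D)} \, \delta^{\,N + 2 - 1/p},
\]
which is uniformly bounded across atoms because $N + 2 - 1/p > 0$; the constant-atom case $a \equiv c$, $|c| \leq 1$, is trivial. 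Since $0 < p \leq 1$ implies $\ell^p \subset \ell^1$, the series $\sum_n c_n \langle a_n, \varphi\rangle$ converges absolutely, so $h := \sum_n c_n a_n$ is well-defined in $\mathcal{D}'(\p D)$.

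For step (ii), let $Mh(\theta) := \sup_{0 \leq r < 1} \frac{1}{2\pi}|\langle h, P_r(\theta - \cdot)\rangle|$ denote the Poisson radial maximal function. Applying the pointwise inequality $|\sum_n x_n|^p \leq \sum_n |x_n|^p$ (valid for $0 < p \leq 1$) inside the supremum and then exchanging supremum with the resulting sum of non-negative terms yields $Mh(\theta)^p \leq \sum_n |c_n|^p\, Ma_n(\theta)^p$. Integrating and using Tonelli,
\[
\int_0^{2\pi} Mh(\theta)^p\, d\theta \leq \sum_n |c_n|^p \int_0^{2\pi} Ma_n(\theta)^p\, d\theta.
\]
The task reduces to bounding $\int_0^{2\pi} Ma(\theta)^p\, d\theta$ by an absolute constant independent of the particular $p$-atom $a$.

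The main obstacle is step (iii), the uniform atomic estimate. For $a$ supported in the arc $J$ of length $\delta$ centered at $\theta_0$, I split the integral over $\p D$ into a ``near'' region $\{|\theta - \theta_0| \leq 2\delta\}$ and a ``far'' region. On the near region, the crude bound $Ma(\theta) \lesssim \|a\|_\infty \leq \delta^{-1/p}$ produces an $L^p$-contribution of order $\delta \cdot \delta^{-1} = O(1)$. On the far region, I would Taylor-expand $\phi \mapsto P_r(\theta - \phi)$ about $\theta_0$ to order $N$, cancel the polynomial part via the moment conditions, and invoke the standard kernel bound $|\p_\phi^{N+1} P_r(\theta - \phi)| \lesssim |\theta - \phi|^{-N-2}$ (uniform in $r \in [0,1)$) to obtain $|(P_r * a)(\theta)| \lesssim \delta^{\,N + 2 - 1/p}|\theta - \theta_0|^{-N-2}$; the resulting integral $\int_{|\theta - \theta_0| > 2\delta} |\theta - \theta_0|^{-(N+2)p}\, d\theta$ converges precisely because $(N+2)p > 1$, and a direct computation shows the powers of $\delta$ cancel exactly, leaving another $O(1)$ contribution. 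Constant atoms satisfy $Ma \equiv |a| \leq 1$. Combining all cases gives $\int_0^{2\pi} Ma(\theta)^p\, d\theta \leq C$ uniformly in the atom, hence $\int_0^{2\pi} Mh(\theta)^p\, d\theta \leq C \sum_n |c_n|^p < \infty$, so $h \in \mathcal{H}^p(D)$.
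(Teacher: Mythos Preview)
Your argument is the standard ``atoms-to-maximal'' direction of real-variable Hardy space theory and is essentially correct. Note, however, that the paper does not supply its own proof of this proposition: it is quoted verbatim as Proposition~2 of Colzani's thesis \cite{Col} and invoked as an external result, so there is no in-paper argument to compare against. Your sketch is precisely the classical proof one expects Colzani's thesis (or any standard reference such as \cite{BigStein}) to contain, and the three steps---distributional convergence via moment cancellation and Taylor remainders, $p$-subadditivity to reduce to a single atom, and the near/far split with the kernel derivative bound $|\partial_\phi^{N+1}P_r(\theta-\phi)|\lesssim |\theta-\phi|^{-N-2}$---are all sound. One minor point worth tightening in step~(iii): the uniform-in-$r$ derivative estimate for the Poisson kernel on the circle is more accurately stated as $|\partial_\phi^{k}P_r(\theta-\phi)|\lesssim ((1-r)+|\theta-\phi|)^{-k-1}$, which in the far region (where $|\theta-\phi|\sim|\theta-\theta_0|$) indeed yields the bound you use; as written the estimate is stated a bit loosely but the conclusion is unaffected.
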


\begin{prop}\label{Hardysubset}
The space of distributions $\re{(H^p(D))_b}$ is a subset of $\mathcal{H}^p(D)$. 
\end{prop}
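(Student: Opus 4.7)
The plan is to combine Theorem \ref{GHJH2sixpointtwo} and Proposition \ref{Colzanitwo} directly. The key observation is that the class $\re(H^p(D))_b$ is a trivial subclass of the larger class $\re(H^p(D))_b + i\re(H^p(D))_b$ appearing in item (1) of Theorem \ref{GHJH2sixpointtwo}, so anything in the former has an atomic decomposition, and Colzani's result immediately places such a distribution in $\mathcal{H}^p(D)$.

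First I would fix an arbitrary $h \in \re(H^p(D))_b$, so that by definition there exists $H \in H^p(D)$ with $\re H_b = h$. Since the zero function lies in $H^p(D)$, its real boundary value (the zero distribution) lies in $\re(H^p(D))_b$, so I can write
\[
h = h + i \cdot 0 \in \re(H^p(D))_b + i\,\re(H^p(D))_b,
\]
placing $h$ in the class appearing as item (1) of Theorem \ref{GHJH2sixpointtwo}.

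Next, I would invoke the implication (1) $\Rightarrow$ (3) of Theorem \ref{GHJH2sixpointtwo} to produce a sequence of $p$-atoms $\{a_n\}$ and coefficients $\{c_n\} \in \ell^p$ with
\[
h = \sum_n c_n a_n
\]
as distributions on $\partial D$. Finally I would apply Proposition \ref{Colzanitwo} to this atomic decomposition to conclude that $h \in \mathcal{H}^p(D)$, completing the proof.

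There is no substantive obstacle here; the whole proof is a two-line concatenation of the two cited results, with the only minor point to verify being that the notion of $p$-atom in Theorem \ref{GHJH2sixpointtwo} matches the one used in Proposition \ref{Colzanitwo}, which is the case by the setup of the paper. Consequently the proof I would write is essentially the display above together with a sentence identifying the applicable results.
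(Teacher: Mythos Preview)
Your proposal is correct and matches the paper's own proof essentially line for line: embed $h$ as $h + i\cdot 0$ in $\re(H^p(D))_b + i\,\re(H^p(D))_b$, apply Theorem~\ref{GHJH2sixpointtwo} to obtain an atomic decomposition, and then invoke Proposition~\ref{Colzanitwo} to conclude $h \in \mathcal{H}^p(D)$. The only difference is cosmetic---you add the (harmless) remark that $0 \in \re(H^p(D))_b$ because $0 \in H^p(D)$.
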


\begin{proof}
Let $h \in \re{(H^p(D))_b}$. By definition, $ h = h + i 0 \in \re{(H^p(D))_b} + i \re{(H^p(D))_b}$. By Theorem \ref{GHJH2sixpointtwo}, there exist sequences $\{a_n\}$ of $p$-atoms and $\{c_n\} \in \ell^p$ such that $h = \sum_n c_n a_n$. By Proposition \ref{Colzanitwo}, $h = \sum_n c_n a_n \in \mathcal{H}^p(D)$. 
\end{proof}

Next, we describe a specific variation of the Schwarz boundary value problem that is natural to consider with the space of functions $\Hpq$.

\begin{deff}
We say a function $w: D \to \mathbb{C}$ solves the Schwarz-type boundary value problem $\mathcal{S}_{f,h}$ whenever $w$ satisfies the following:
\begin{enumerate}
\item $\frac{\p w}{\p\z} = f$, for some function $f: D \to \mathbb{C}$ and  
\item $\re{w_b} = h$ as distributions, where $h \in \mathcal{D}'(\p D)$.
\end{enumerate}
\end{deff}

This problem is explicitly solved by Theorem 2.1 of \cite{Beg} for the case $f \in L^1(D)$ and $h \in C(\p D)$. In the proof of the next theorem, we construct solutions to a variation of the problem which are in the Hardy classes $\Hpq$.

\begin{theorem}\label{schwarzfirst}
For each $f \in L^q(D)$, $q>1$, and $h \in \re{(H^p(D))_b} $, $0 < p \leq 1$, there exist $w \in \Hpq$ which solve the boundary value problem $\mathcal{S}_{f,h}$. 
\end{theorem}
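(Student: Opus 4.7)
The plan is to combine Theorem~\ref{altmain} with the identity $\re\{(\widetilde{T}(f))_b\} = 0$ highlighted just after its statement. Since $h \in \re{(H^p(D))_b}$, by definition there exists $\Phi \in \Har$ with $\re \Phi_b = h$. I would then set
\[
w(z) := \Phi(z) + \widetilde{T}(f)(z)
\]
as the candidate solution, and verify in turn the PDE, the $H^p$-integrability, and the boundary condition.

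The PDE condition $\p w/\p\z = f$ is automatic: $\Phi$ is holomorphic so contributes nothing, and $\widetilde{T}$ is a right inverse to $\p/\p\z$ on $L^1(D)$. For the integrability that defines $\Hpq$, I would repeat the size estimate used in the proof of Theorem~\ref{main}, with $T$ replaced by $\widetilde{T}$: use the $p$-triangle inequality to reduce to $\int_0^{2\pi}|\widetilde{T}(f)(re^{i\theta})|^p\,d\theta$, split this integral over $\{|\widetilde{T}(f)|\leq 1\}$ and $\{|\widetilde{T}(f)|>1\}$, and dominate the second piece by the $L^\gamma(\p D(0,r))$-norm that is bounded uniformly in $r$ by the $\widetilde{T}$-analogue of Theorem~\ref{onefourseven} already cited in the excerpt.

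For the boundary condition, the argument from the proof of Theorem~\ref{main} (which Theorem~\ref{altmain} records also carries over to $\widetilde{T}$) shows that $w_b$ exists on $\p D$ with $w_b = \Phi_b + (\widetilde{T}(f))_b$. Taking real parts gives
\[
\re w_b = \re \Phi_b + \re(\widetilde{T}(f))_b = h + 0 = h,
\]
where the vanishing of the second term is exactly the identity $\re\{(\widetilde{T}(f))_b\} = 0$ recorded in the discussion following Theorem~\ref{altmain}.

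The one place that requires care, and which I expect to be the main obstacle, is justifying $\re(\widetilde{T}(f))_b = 0$ at the distributional level when $1 < q \leq 2$, since in that range $\widetilde{T}(f)$ need not extend continuously to $\overline{D}$ and one cannot simply evaluate its defining integrand at $|z|=1$. I would appeal to the $\widetilde{T}$-version of Theorem~\ref{onefoureight} to identify $(\widetilde{T}(f))_b$ with the $L^\gamma(\p D)$-trace of $\widetilde{T}(f)$, and then observe that this trace is purely imaginary via the cancellation
\[
\frac{f(\zeta)}{\zeta - z} + \frac{z\overline{f(\zeta)}}{1-\overline{\zeta}z} = 2i\,\im\left\{\frac{f(\zeta)}{\zeta - z}\right\},
\]
valid for $|z|=1$, which the excerpt already invokes. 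When $q > 2$, the identity is immediate from Theorem~\ref{onepointnineteen}, since $\widetilde{T}(f) \in C^{0,\alpha}(\overline{D})$ and pointwise evaluation on $\p D$ is purely imaginary.
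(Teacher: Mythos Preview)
Your proposal is correct and follows essentially the same approach as the paper: define $w = \Phi + \widetilde{T}(f)$ for a holomorphic Hardy function $\Phi$ with $\re\Phi_b = h$, then use that $\widetilde{T}(f)$ is purely imaginary on $\p D$. The only cosmetic difference is that the paper takes the specific choice $\Phi = \langle h, P_r(\theta-\cdot) + iQ_r(\theta-\cdot)\rangle$ via Theorem~\ref{GHJH2sixpointtwo} rather than invoking the definition of $\re(H^p(D))_b$ directly, and your discussion of the $1<q\leq 2$ boundary-trace subtlety is in fact more careful than the paper's own argument.
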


\begin{proof}
We fix arbitrary $f \in L^q(D)$, for some $q$ satisfying $q>1$, and $h \in \re{(H^p(D))_b}$, for some $p$ satisfying $0 < p \leq 1$. We explicitly describe the construction of $w \in \Hpq$ that satisfies $\mathcal{S}_{f,h}$.

From Theorem \ref{GHJH2sixpointtwo}, 
\[
H = \langle h, P_r(\theta - \cdot) + i Q_r(\theta - \cdot) \rangle \in H^p(D).
\]
Let 
\[
w = H + \widetilde{T}(f).
\]
By its construction, $w \in \Hpq$. Observe that, for $|z| = 1$,  
\begin{align*}
\widetilde{T}(f)(z) 
&= -\iint_D \left( \frac{f(\zeta)}{\zeta - z} + \frac{ z\overline{f(\zeta)}}{1-z\overline{\zeta}}\right) d\xi \,d\eta \\
&= -\iint_D \left( \frac{f(\zeta)}{\zeta - z} - \overline{\left(\frac{ f(\zeta)}{\zeta-z}\right)}\right) d\xi \,d\eta ,
\end{align*}
which is purely imaginary. By Theorem 3.1 of \cite{GHJH2}, $\re\{H_b\} = h$. Hence, 
\begin{align*}
\re\{w_b\}
&= \re\{(H + \widetilde{T}(f))_b\}\\
&= \re\{H_b\} + \re\{(\widetilde{T}(f))_b\} \\
&= h.
\end{align*}

\end{proof}

\begin{corr}\label{atomicdecompforfirst}
For any $w\in \Hpq$ that solves $\mathcal{S}_{f,h}$ for some $f \in L^q$, $q>1$ and $h \in \re{(H^p(D))_b}$, $0 < p  \leq 1$, with representation $w = \langle h, P_r(\theta - \cdot) + i Q_r(\theta - \cdot) \rangle + \widetilde{T}(f)$ such that $(\langle h, iQ_r(\theta - \cdot) + \widetilde{T}(f))_b= 0$, $w_b$ has an atomic decomposition. 
\end{corr}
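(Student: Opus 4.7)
The plan is to use the vanishing assumption to reduce $w_b$ to $h$ and then invoke Theorem \ref{GHJH2sixpointtwo} to extract an atomic decomposition from $h$ itself. The corollary is essentially a bookkeeping consequence of Theorem \ref{schwarzfirst} and the equivalence (1)$\Leftrightarrow$(3) of Theorem \ref{GHJH2sixpointtwo}.

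First I would split the given representation by linearity as
\[
w = \langle h, P_r(\theta - \cdot)\rangle + \bigl(\langle h, i Q_r(\theta - \cdot)\rangle + \widetilde{T}(f)\bigr),
\]
and pass to boundary values in the sense of distributions on both sides. The hypothesis of the corollary is precisely that the second summand has zero boundary value, so
\[
w_b = \langle h, P_r(\theta - \cdot)\rangle_b.
\]
The remaining term is the Poisson integral of $h$. By Proposition \ref{Hardysubset} we have $h \in \mathcal{H}^p(D)$, and the standard distributional Poisson theory for $\mathcal{H}^p(D)$ (which is built into the equivalence (1)$\Leftrightarrow$(2) of Theorem \ref{GHJH2sixpointtwo}) identifies the boundary value of the Poisson integral of $h$ with $h$ itself. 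Hence $w_b = h$ as elements of $\mathcal{D}'(\p D)$.

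To conclude, I would apply Theorem \ref{GHJH2sixpointtwo} to $h$. The trivial factorization $h = h + i\cdot 0$ places $h$ in $\re\{(H^p(D))_b\} + i\,\re\{(H^p(D))_b\}$, so implication (1)$\Rightarrow$(3) of that theorem produces a sequence $\{c_n\} \in \ell^p$ and a collection of $p$-atoms $\{a_n\}$ with $h = \sum_n c_n a_n$ as distributions on $\p D$. Combined with the identity $w_b = h$ from the previous step, this is the desired atomic decomposition of $w_b$.

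The main obstacle is minor: the only mildly delicate point is the identification $\langle h, P_r(\theta - \cdot)\rangle_b = h$, which is standard $\mathcal{H}^p$-Poisson theory but is the one ingredient not stated explicitly earlier in the paper. I would devote a short sentence or a reference to \cite{Col} or \cite{BigStein} to justify this recovery property, after which the rest of the argument is a direct linear bookkeeping step.
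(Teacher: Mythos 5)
Your proposal is correct and follows essentially the same route as the paper: use the vanishing hypothesis to reduce $w_b$ to $h$, then apply Theorem \ref{GHJH2sixpointtwo} with the trivial factorization $h = h + i\cdot 0$ to extract the atomic decomposition. The paper obtains the identity $w_b = h$ by writing $w_b = \re\{w_b\} + i\im\{w_b\}$ and invoking $\re\{w_b\} = h$ from Theorem \ref{schwarzfirst}, which already encapsulates the Poisson-recovery fact you flag as the delicate point (it is justified there by Theorem 3.1 of \cite{GHJH2}), so no additional reference is needed.
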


\begin{proof}
Reusing all objects from the proof of the theorem, since 
\[
w_b = H_b + (\widetilde{T}(f))_b
\]
and 
\[
\re\{w_b\} = h, 
\]
it follows that if $(\langle h, iQ_r(\theta - \cdot) + \widetilde{T}(f))_b= 0$, then 
\begin{align*}
w_b &= \re\{w_b\} + i \im\{w_b\} = h + i \im\{(\langle h, iQ_r(\theta - \cdot) + \widetilde{T}(f))_b\}= h.
\end{align*}
By Theorem \ref{GHJH2sixpointtwo}, $h$ has an atomic decomposition
\[
h = \sum_{n} c_n a_n.
\]
Thus, 
\[
w_b = \sum_{n} c_n a_n.
\]
\end{proof}

Note that the above result gives us conditions which characterize a subset of $\Hpq$ having boundary values in the sense of distributions with a representation by an atomic decomposition.

\section{Extension to higher-order equations}

In this section, we describe how the methods that we applied above in the first-order case extend to analogous results for Hardy classes of solutions to higher-order equations by iteration.

First, let us define the higher-order variants of the classes $\Hp$.

\begin{deff}For each positive integer $n \geq 1$ and real $p$ in the range $0 < p < \infty$, we define $H^{n,p}_{f}(D)$ to be the set of functions $w : D \to \mathbb{C}$ that are solutions to
\begin{equation*}
\frac{\p^n w}{\p \z^n} = f, \label{CRhigher}
\end{equation*}
where $f: D \to \mathbb{C}$, and satisfy
\[
\left|\left|\frac{\p^k w}{\p\z^k}\right|\right|_{\Har} := \left(\sup_{0< r< 1} \frac{1}{2\pi} \int_{0}^{2\pi} \left|\frac{\p^k w}{\p\z^k}(re^{i\theta})\right|^p \,d\theta\right)^{1/p} < \infty,
\]
for $0 \leq k \leq n-1$.
\end{deff}

Note that the case $f \equiv 0$ and $n = 0$ is the classic holomorphic Hardy spaces $\Har$.  The case $f \equiv 0$ and $n >0$ is the polyanalytic Hardy spaces $H^p_n(D)$. For background on the spaces $H^p_n(D)$, see \cite{Balk}, \cite{polyhardy}, and \cite{polyhardyrep}. For some background on a specific $f \not\equiv 0$ case (Hardy spaces of meta-analytic functions), see \cite{Balk} and \cite{metahardy}. For $f \not\equiv 0$ with respect to a Schwarz boundary value problem comparable to the one we will consider (but without the Hardy integrability condition), see \cite{Beg}. As in the first-order case, we constrain ourselves to the cases of $0 < p \leq 1$. 

To again take advantage of the operators $T(\cdot)$ and $\widetilde{T}(\cdot)$, we will restrict ourselves to the same spaces for $f$ as we did in the first order case. We make that explicit with the following definition.

\begin{deff}
For $0 < p \leq 1$ and $n \geq 1$, we denote by $\Hpqn$ the space of functions $w \in H^{n,p}_f(D)$ where $f \in L^q(D)$, $q>1$.
\end{deff}

We now state the extension of Theorem \ref{main} for the classes $\Hpqn$.

\begin{theorem}\label{mainhigher}
For $\frac{1}{2} < p \leq 1$, $n \geq 1$, and $q>1$, every $w \in \Hpqn$ has a representation of the second kind
\begin{equation}\label{generalsecondkindhigher}
w(z) = \Phi_0(z) + \Psi(z),
\end{equation}
where,
\[
\Psi(z) := \sum_{k = 1}^{n-1}\left(T^k(\Phi_k)(z)\right) + T^n(f)(z),
\]
$\Phi_k \in \Har$, $0 \leq k \leq n-1$,

\[
T(f)(z) := -\frac{1}{\pi} \iint_D \frac{f(\zeta)}{\zeta - z} \,d\xi\,d\eta,
\]
and $T^k(f)$ indicates the $k$-times iteration of the operator $T(\cdot)$ being applied to $f$.
Also, the boundary value in the sense of distributions of $w$, denoted by $w_b$, exists and can be represented as 
\[
w_b = \sum_{\ell} c_\ell a_\ell + \Psi_b.
\]
where $\{c_\ell\} \in \ell^p$, $\{a_\ell\}$ is a collection of $p$-atoms, and $\Psi_b \in L^\gamma(\p D)$, for some $\gamma >1$. 
\end{theorem}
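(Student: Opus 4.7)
The plan is to prove Theorem \ref{mainhigher} by induction on $n$, using Theorem \ref{main} both as the base case ($n=1$) and as the single step of the inductive machinery. Setting $w^{(k)} := \partial^k w / \partial \z^k$ for $0 \le k \le n$, the hypothesis says $w^{(0)} = w, w^{(1)}, \dots, w^{(n-1)} \in \Har$ and $w^{(n)} = f \in L^q(D)$. The goal is to peel off one $H^p$-holomorphic summand at each level: apply Theorem \ref{main} (or Theorem \ref{oneonesix} together with the size estimate of Theorem \ref{main}) to the first-order equation $\partial w^{(k)} / \partial \z = w^{(k+1)}$ to produce a holomorphic $\Phi_k \in \Har$ and an error $T(w^{(k+1)})$. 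Iterating and telescoping gives
\[
w = \Phi_0 + T(w^{(1)}) = \Phi_0 + T(\Phi_1) + T^2(w^{(2)}) = \cdots = \Phi_0 + \sum_{k=1}^{n-1} T^k(\Phi_k) + T^n(f),
\]
which is the desired representation \eqref{generalsecondkindhigher}.

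To invoke Theorem \ref{main} at level $k$ I need $w^{(k+1)} \in L^{q'}(D)$ for some $q' > 1$. For $k+1 = n$ this is given. For $k+1 < n$, $w^{(k+1)}$ lies in $\Har$, and by the Klimentov embedding $\Har \subset L^m(D)$ for $0 < m < 2p$ (Theorem 2.1.5 of \cite{Klim2016}, as cited in the example following Theorem \ref{main}); the restriction $p > 1/2$ is imposed precisely so that $2p > 1$ and we may select $m > 1$. With this $L^{q'}$ input, Theorem \ref{oneonesix} gives $w^{(k)} = \Phi_k + T(w^{(k+1)})$ with $\Phi_k$ holomorphic, and the exact size-splitting argument of Theorem \ref{main}, using Theorem \ref{onefourseven} to bound $\|T(w^{(k+1)})\|_{L^\gamma(\p D(0,r))}$ uniformly in $r$, shows $\Phi_k \in \Har$. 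For the boundary-value statement, Theorem 2.2 of \cite{GHJH2} gives the atomic decomposition $\Phi_{0,b} = \sum_\ell c_\ell a_\ell$ with $\{c_\ell\} \in \ell^p$. Each remaining summand $T^k(\Phi_k)$ has $\Phi_k \in L^m(D)$ with $m > 1$, and iterating Theorem \ref{Vekonepointtwentysix} (switching to Theorem \ref{onepointnineteen} once the index exceeds $2$) shows $T^{k-1}(\Phi_k) \in L^{q''}(D)$ for some $q'' > 1$; then Theorem \ref{onefourseven} bounds $T^k(\Phi_k)$ uniformly in $L^\gamma(\p D(0,r))$ for some $\gamma > 1$, and Theorem \ref{onefoureight} identifies its distributional boundary value with its $L^\gamma(\p D)$ boundary value. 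The same argument applied to $f \in L^q(D)$ handles $T^n(f)$. Summing the finitely many pieces yields $\Psi_b \in L^\gamma(\p D)$ after replacing $\gamma$ by the minimum of the indices produced at each summand.

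The main obstacle is the bookkeeping of Lebesgue indices through the iteration. Each application of $T(\cdot)$ improves (or, in the worst case, maintains above $1$) the integrability exponent, so after enough applications the iterates live in $L^\infty$ or even in $C^{0,\alpha}(\overline D)$; the delicate point is the \emph{first} step for each summand, where a summand must be taken from $\Har$ into $L^m(D)$ with $m > 1$, and this is exactly where the hypothesis $p > 1/2$ is essential. Once that first conversion is in hand, the subsequent iterates of $T$ are handled by direct appeals to Theorems \ref{Vekonepointtwentysix}, \ref{onefourseven}, \ref{onefoureight}, and \ref{onepointnineteen} without further hypothesis on $p$.
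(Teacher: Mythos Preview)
Your overall strategy—iterate Theorem \ref{main} level by level and telescope via linearity of $T$—is exactly the paper's approach, and your treatment of the boundary-value part in the third paragraph is fine. There is, however, a slip in the second paragraph that, as written, is a genuine gap.

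You assert that for $k+1<n$ the function $w^{(k+1)}$ ``lies in $\Har$'' and then invoke the Klimentov embedding $\Har\subset L^m(D)$ for $m<2p$. But in this paper $\Har=H^p(D)$ denotes the \emph{holomorphic} Hardy space, and $w^{(k+1)}$ is not holomorphic: it satisfies $\partial w^{(k+1)}/\partial\bar z=w^{(k+2)}\not\equiv 0$. The hypothesis defining $\Hpqn$ only gives the norm condition $\|w^{(k+1)}\|_{\Har}<\infty$, and the embedding into $L^m(D)$ for $p<m<2p$ genuinely relies on subharmonicity of $|\Phi|^p$; it fails for arbitrary functions with bounded $L^p$ circle means (e.g.\ take $w(re^{i\theta})=h(\theta)$ with $h\in L^p(\partial D)\setminus L^m(\partial D)$). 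So you cannot conclude $w^{(k+1)}\in L^{q'}(D)$ directly from the norm bound.

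The fix is what the paper does and is compatible with your induction: run the argument \emph{from the top down}. Apply Theorem \ref{main} first at level $n-1$, obtaining $w^{(n-1)}=\Phi_{n-1}+T(f)$ with $\Phi_{n-1}\in\Har$ genuinely holomorphic. Now the embedding (Lemma 1.8.3 of \cite{KlimBook}) legitimately gives $\Phi_{n-1}\in L^{s_{n-1}}(D)$ for some $s_{n-1}>1$ (here is where $p>\tfrac12$ is used), and Theorem \ref{Vekonepointtwentysix} gives $T(f)\in L^\gamma(D)$; hence $w^{(n-1)}\in L^{\nu_{n-1}}(D)$ with $\nu_{n-1}=\min\{s_{n-1},\gamma\}>1$. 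This is exactly the input needed to invoke Theorem \ref{main} at level $n-2$, and one continues downward. In short, the embedding is applied to the holomorphic summands $\Phi_k$, not to $w^{(k+1)}$ itself; the membership $w^{(k+1)}\in L^{>1}(D)$ is a \emph{consequence} of the decomposition already obtained one level above. With this reordering your argument coincides with the paper's.
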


\begin{proof}
By applying Theorem \ref{main}, we have 
\begin{equation*}
\frac{\p^{n-1}w}{\p \z^{n-1}}(z) = \Phi_{n-1}(z) + T(f)(z),
\end{equation*}
where $\Phi_{n-1} \in \Har$ and $T(f) \in L^\gamma(D)$, $1 < \gamma < \frac{2q}{2-q}$. 

By Lemma 1.8.3 from \cite{KlimBook}, since $\Phi_{n-1} \in \Har$, it follows that $\Phi_{n-1} \in L^m(D)$, for all $0 < m < 2p$. Since we are assuming that $\frac{1}{2}< p \leq 1$, it follows that there exists an $s_{n-1}>1$ such that $\Phi_{n-1} \in L^{s_{n-1}}(D)$. Thus, $\Phi_{n-1} + T(f) \in L^{\nu_{n-1}}(D)$, where $\nu_{n-1} := \min\{s_{n-1},\gamma\}$, and $\nu_{n-1} >1$. Hence, we may iterate this process $n-1$ times and appeal to the linearity of the $T(\cdot)$ operator to arrive at the desired representation
\[
w(z) = \Phi_{0}(z) + \sum_{k = 1}^{n-1}\left(T^k(\Phi_k)(z)\right) + T^n(f)(z),
\]
where $\Phi_k \in \Har$, for $k = 0, 1, \ldots, n-1$, $\Psi := \sum_{k = 1}^{n-1}\left(T^k(\Phi_k)(z)\right) + T^n(f)(z)$ is a finite sum of elements of $L^\sigma(D)$, where $\sigma:=\min\{\nu_{k}\}_{k=1}^{n-1}$. Consequently $\Psi$ is an element of $L^\sigma(D)$, and $\sigma > 1$. 

Also, by the exact same argument as that given in the proof of Theorem \ref{main}, we have that the boundary value in the sense of distributions of $w$ is given by
\[
w_b = \Phi_{0,b} + \Psi_{b},
\]
where $\Phi_{0,b}$ is the boundary value in the sense of distributions of $\Phi_0 \in \Har$ and $\Psi_b$ is the boundary value in the sense of distributions of $\Psi$, and $\Psi$ is also an element of $L^\omega(\p D)$ for some $\omega >1$. Since $\Phi_{0,b}$ is a boundary value in the sense of distributions of an element of $\Har$, it follows that $\Phi_{0,b}$ can be represented by an atomic decomposition, so 
\[
w_b = \sum_{\ell} c_\ell a_\ell + \Psi_b,
\]
where $\{c_\ell\} \in \ell^p$ and $\{a_\ell\}$ is a sequence of $p$-atoms.

\end{proof}

In the same way as Theorem \ref{main} was rewritten to Theorem \ref{altmain}, we adapt Theorem \ref{mainhigher} to the following.

\begin{theorem}\label{altmainhigher}
Every $w \in \Hpqn$ can be represented as 
\begin{equation}\label{altsecondrep}
w(z) = \Phi_0(z) + \widetilde{\Psi}(z),
\end{equation}
where 
\[
\widetilde{\Psi}(z) := \widetilde{T}(\Phi_1 + \widetilde{T}(\Phi_2 + \widetilde{T}( \cdots \Phi_{n-1} + \widetilde{T}(f)) \cdots)),
\]
and $\Phi_k \in \Har$, $0 \leq k \leq n-1$.
Also, the boundary value in the sense of distributions of $w$, denoted by $w_b$, exists and can be represented as 
\[
w_b = \Phi_{0,b} + \widetilde{\Psi}_b.
\]
where $\Phi_{0,b}$ is the boundary value in the sense of distributions of $\Phi_0$ and $\widetilde{\Psi}_b$ is the boundary value in the sense of distributions of $\widetilde{\Psi}$. 
\end{theorem}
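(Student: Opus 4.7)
The plan is to mimic the proof of Theorem~\ref{mainhigher} step by step, replacing $T(\cdot)$ by $\widetilde{T}(\cdot)$ throughout and invoking Theorem~\ref{altmain} in place of Theorem~\ref{main} at each iteration. As in Theorem~\ref{mainhigher}, the implicit restriction $\tfrac{1}{2}<p\leq 1$ is what makes the iteration go through. First I would apply Theorem~\ref{altmain} to $\tfrac{\p^{n-1}w}{\p\z^{n-1}}$, which belongs to $H^p_{f,q}(D)$ directly from the definition of $\Hpqn$, obtaining
\[
\frac{\p^{n-1}w}{\p\z^{n-1}}(z) \;=\; \Phi_{n-1}(z) + \widetilde{T}(f)(z)
\]
with $\Phi_{n-1}\in\Har$.

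Next I would peel off one $\dbar$ at a time. To reapply Theorem~\ref{altmain} to $\tfrac{\p^{n-2}w}{\p\z^{n-2}}$, its $\dbar$-derivative $\Phi_{n-1}+\widetilde{T}(f)$ must lie in $L^{\nu_{n-1}}(D)$ for some $\nu_{n-1}>1$. The operator $\widetilde{T}$ satisfies the analogue of Theorem~\ref{Vekonepointtwentysix} noted just before Theorem~\ref{altmain}, so $\widetilde{T}(f)\in L^\gamma(D)$ for some $\gamma>1$; and by Lemma~1.8.3 of \cite{KlimBook}, $\Phi_{n-1}\in L^{s_{n-1}}(D)$ for any $0<s_{n-1}<2p$, which can be taken $>1$ because $p>\tfrac{1}{2}$. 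Setting $\nu_{n-1}:=\min\{s_{n-1},\gamma\}$ and reapplying Theorem~\ref{altmain} yields
\[
\frac{\p^{n-2}w}{\p\z^{n-2}}(z) \;=\; \Phi_{n-2}(z) + \widetilde{T}\bigl(\Phi_{n-1} + \widetilde{T}(f)\bigr)(z),
\]
with $\Phi_{n-2}\in\Har$. Iterating this argument $n-2$ more times builds up exactly the nested $\widetilde{T}$ representation~(\ref{altsecondrep}).

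For the distributional boundary value I would proceed as in Theorem~\ref{mainhigher}: $\Phi_{0,b}$ exists by Theorem~2.2 of \cite{GHJH2}, while $\widetilde{\Psi}$ is a $\widetilde{T}$-transform of an $L^\omega(D)$ function with $\omega>1$, so the $\widetilde{T}$-analogue of Theorem~\ref{onefoureight} shows that its distributional boundary value exists and coincides with its $L^\omega(\p D)$ radial trace, giving $w_b=\Phi_{0,b}+\widetilde{\Psi}_b$. The hard part, such as it is, is purely bookkeeping: one must verify at each of the $n-1$ iteration steps that the input to $\widetilde{T}$ sits in some $L^\nu(D)$ with $\nu>1$, which is the only place where $p>\tfrac{1}{2}$ is actually used, exactly as in Theorem~\ref{mainhigher}. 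No new ideas beyond that proof are required, only the routine observation that the $\widetilde{T}$-versions of the relevant theorems cited in the discussion preceding Theorem~\ref{altmain} make every appeal go through unchanged.
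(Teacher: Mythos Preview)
Your proposal is correct and is exactly the approach the paper intends: the paper gives no separate proof of Theorem~\ref{altmainhigher}, merely remarking that it is obtained from Theorem~\ref{mainhigher} in the same way Theorem~\ref{altmain} is obtained from Theorem~\ref{main}, which is precisely the iteration-with-$\widetilde{T}$ argument you have written out. The only minor quibble is that the existence of $\Phi_{0,b}$ is Theorem~3.1 (not~2.2) of \cite{GHJH2}, and the paper only explicitly asserts the $\widetilde{T}$-analogues of Theorems~\ref{Vekonepointtwentysix} and~\ref{onefourseven}; but since the paper itself relies on the same implicit carryover for the boundary-value step in Theorem~\ref{altmain}, your appeal is no less justified than the paper's own.
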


With the representation provided by Theorem \ref{mainhigher}, the argument for the continuity of the Hilbert transform on the space of boundary values in the sense of distribution follows directly from the argument in the first-order case. 

\begin{deff}
Denote by $(\Hpqn)_b$ the set of distributions $w_b \in \mathcal{D}'(\p D)$ that are boundary values in the sense of distributions of $w \in \Hpqn$. 
\end{deff}

\begin{deff}
We define the quasi-norm $||\cdot||$ by
\[
||w_b|| := ||\sum_{j}c_j a_j ||_{at} + ||\Psi_b||_{L^\gamma(\p D)},
\]
for each $w_b \in (\Hpqn)_b$, $\frac{1}{2} < p \leq 1$, $n \geq 1$, and 
\[
w_b = \sum_{j} c_j a_j + \Psi_b
\]
is the representation from Theorem \ref{mainhigher}, $\gamma$ is the same number as in the statement of Theorem \ref{onefourseven}, and
\[
||\sum_{j} c_j a_j||_{at} := \left( \inf \sum_{j} |c_j|^p \right)^{1/p},
\]
where the infimum is taken over the set of all sequences of coefficients $\{c_j\}$ of equivalent atomic decompositions.
\end{deff}

\begin{theorem}
For $\Hpqn$, where $n \geq 1$, $\frac{1}{2} < p \leq 1$, and $f \in L^q(D)$, $q>1$, the Hilbert transform is a continuous operator on the space $(\Hpqn)_b$.
\end{theorem}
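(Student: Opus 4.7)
The plan is to mirror the proof of the first-order Hilbert transform continuity theorem given immediately above Corollary 3.1, with Theorem \ref{mainhigher} replacing Theorem \ref{main}. Specifically, for any $w \in \Hpqn$, Theorem \ref{mainhigher} provides the decomposition
\[
w_b = \sum_{\ell} c_\ell a_\ell + \Psi_b,
\]
where $\{c_\ell\} \in \ell^p$, $\{a_\ell\}$ is a collection of $p$-atoms, and $\Psi_b \in L^\gamma(\p D)$ for some $\gamma > 1$. This is formally identical in shape to the first-order decomposition, so the argument splits cleanly into an atomic piece living in $H^p_{at}(\p D)$ and a function piece living in $L^\gamma(\p D)$.

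Next, by linearity of the Hilbert transform $H$, I would write
\[
H(w_b) = H\!\left(\sum_{\ell} c_\ell a_\ell\right) + H(\Psi_b).
\]
Proposition \ref{hilbatomiccont} supplies continuity of $H$ from $H^p_{at}(\p D)$ to itself with some constant $C_p$, and Theorem \ref{hilblpcont} supplies continuity of $H$ from $L^\gamma(\p D)$ to $L^\gamma(\p D)$ with some constant $C_\gamma$ (recall $\gamma > 1$). Adding the resulting inequalities and using the definition of the quasi-norm on $(\Hpqn)_b$ yields
\[
||H(w_b)|| \leq C_p \left|\left|\sum_\ell c_\ell a_\ell\right|\right|_{at} + C_\gamma ||\Psi_b||_{L^\gamma(\p D)} \leq C\, ||w_b||,
\]
with $C := \max\{C_p, C_\gamma\}$, which is the desired continuity.

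The main obstacle in this program is not in the two Hilbert-transform estimates themselves---those are imported verbatim from \cite{GHJH2} and the Riesz theorem---but in the existence of the underlying split representation in the higher-order setting. That existence is already supplied by Theorem \ref{mainhigher}, whose hypothesis $\frac{1}{2} < p \leq 1$ is exactly what guarantees each $\Phi_k \in H^p(D)$ sits inside some $L^{s_k}(D)$ with $s_k > 1$, so that iterating the operator $T(\cdot)$ keeps the residual term $\Psi$ inside a single $L^\sigma(D)$ class with $\sigma > 1$, and hence $\Psi_b$ inside $L^\gamma(\p D)$ with $\gamma > 1$. Once that structural input is in hand, the continuity proof reduces to the two-piece estimate described above, and no new ingredients beyond those already developed in the first-order case are required.
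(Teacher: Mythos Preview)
Your proposal is correct and follows exactly the approach the paper intends: the paper does not give a separate proof for this theorem, stating only that ``the argument for the continuity of the Hilbert transform on the space of boundary values in the sense of distribution follows directly from the argument in the first-order case,'' which is precisely the reduction you carry out with Theorem \ref{mainhigher} supplying the decomposition in place of Theorem \ref{main}.
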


With the representation from Theorem \ref{altmainhigher}, we construct solutions of an analogous Schwarz boundary value problem which are members of the classes $\Hpqn$.

\begin{deff}
We say a function $w: D \to \mathbb{C}$ solves the Schwarz boundary value problem $\mathcal{S}^n_{f,g_0, g_1, \ldots, g_{n-1}}$ whenever $w$ satisfies the following:
\begin{enumerate}
\item $\frac{\p^n w}{\p\z^n} = f$, for some function $f: D \to \mathbb{C}$ and  
\item $\re\left\{ \left(\frac{\p^k w}{\p\z^k}\right)_b \right\} = g_k$ as distributions, where the $g_k \in \mathcal{D}'(\p D)$, $0 \leq k \leq n-1$.
\end{enumerate}
\end{deff}

\begin{theorem}\label{schwarzhigher}
For $n \geq 1$, $f \in L^q(D)$, $q>1$, and $g_k \in \re (H^{p_k}(D))_b$, $\frac{1}{2} < p_k \leq 1$, $0\leq k \leq n -1$,  there exist $w \in \Hpqn$ which solve the boundary value problem $\mathcal{S}^n_{f,g_0, g_1, \ldots, g_{n-1}}$, where $p := \min_k\{p_k\}$. 
\end{theorem}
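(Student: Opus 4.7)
The plan is to solve $\mathcal{S}^n_{f, g_0, \ldots, g_{n-1}}$ by iterating the first-order construction from Theorem \ref{schwarzfirst} downward in derivative order, producing functions $v_{n-1}, v_{n-2}, \ldots, v_0$ such that $\frac{\p v_k}{\p\z} = v_{k+1}$ (with $v_n := f$) and $\re\{(v_k)_b\} = g_k$. At each stage the construction will use the $\widetilde{T}$-based representation of Theorem \ref{altmain}, exploiting the fact that $\re\{(\widetilde{T}(\cdot))_b\} = 0$, so that only the holomorphic Hardy piece needs to be matched to the boundary datum $g_k$. Finally $w := v_0$ will be the desired solution and will lie in $\Hpqn$ for $p = \min_k p_k$.

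\emph{Base step.} Apply Theorem \ref{schwarzfirst} to the data $(f, g_{n-1})$ to obtain
\[
v_{n-1} := H_{n-1} + \widetilde{T}(f), \qquad H_{n-1}(z) := \langle g_{n-1},\, P_r(\theta-\cdot) + i Q_r(\theta-\cdot)\rangle,
\]
where $H_{n-1} \in H^{p_{n-1}}(D)$; then $\frac{\p v_{n-1}}{\p\z} = f$ and $\re\{(v_{n-1})_b\} = g_{n-1}$.

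\emph{Iteration.} For $k$ descending from $n-2$ to $0$, suppose inductively that $v_{k+1}$ has been constructed and lies in $L^{q_{k+1}}(D)$ for some $q_{k+1} > 1$. Apply Theorem \ref{schwarzfirst} to the data $(v_{k+1}, g_k)$ to produce
\[
v_k := H_k + \widetilde{T}(v_{k+1}), \qquad H_k(z) := \langle g_k,\, P_r(\theta-\cdot) + i Q_r(\theta-\cdot)\rangle.
\]
Since $\frac{\p}{\p\z}\widetilde{T}(v_{k+1}) = v_{k+1}$ and $H_k$ is holomorphic, $\frac{\p v_k}{\p\z} = v_{k+1}$; since $\re\{(\widetilde{T}(v_{k+1}))_b\} = 0$ and $\re\{(H_k)_b\} = g_k$ by Theorem 3.1 of \cite{GHJH2}, we also have $\re\{(v_k)_b\} = g_k$. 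Setting $w := v_0$, direct differentiation gives $\frac{\p^k w}{\p\z^k} = v_k$ for $0 \leq k \leq n$, so $w$ satisfies all the conditions of $\mathcal{S}^n_{f,g_0,\ldots,g_{n-1}}$; since each $v_k \in H^{p_k}(D) \subset H^p(D)$ with $p = \min_k p_k$, we conclude $w \in \Hpqn$.

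\emph{Main obstacle.} The delicate point is checking that at each inductive step $v_{k+1}$ actually lies in $L^{q_{k+1}}(D)$ for some $q_{k+1} > 1$, so that Theorem \ref{schwarzfirst} can be invoked in the next stage. The Hardy summand $H_{k+1} \in H^{p_{k+1}}(D)$ belongs to $L^m(D)$ for all $m < 2p_{k+1}$ by Lemma 1.8.3 of \cite{KlimBook}; the hypothesis $p_{k+1} > \tfrac{1}{2}$ is exactly what allows the choice $m > 1$. The remaining summand $\widetilde{T}(v_{k+2})$ (or $\widetilde{T}(f)$ at the base) belongs to $L^\gamma(D)$ for some $\gamma > 1$ by the $\widetilde{T}$-analogue of Theorem \ref{Vekonepointtwentysix} (with Theorem \ref{onepointnineteen} handling the $q>2$ regime). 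Intersecting these ranges closes the induction and explains why the restriction $\tfrac{1}{2} < p_k \leq 1$ is imposed, paralleling its role in Theorem \ref{mainhigher}.
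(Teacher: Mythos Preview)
Your proof is correct and follows exactly the iterative construction the paper indicates (its own proof is the single sentence ``by iteration of the arguments of the proof of Theorem \ref{schwarzfirst}''), including the use of Lemma 1.8.3 of \cite{KlimBook} together with the hypothesis $p_k > \tfrac{1}{2}$ to propagate $L^{>1}$ integrability through each stage, just as in Theorem \ref{mainhigher}. One small notational quibble: when you write ``$v_k \in H^{p_k}(D) \subset H^p(D)$'' you mean that $\|v_k\|_{H^{p_k}(D)} < \infty$, since in the paper's convention $H^{p_k}(D)$ denotes the \emph{holomorphic} Hardy space and $v_k$ is not holomorphic.
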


\begin{proof}
The construction is by iteration of the arguments of the proof of Theorem \ref{schwarzfirst}.
\end{proof}

\begin{corr}\label{atomicdecompforhigher}
For any $w \in H^{p,q}_{n,f}(D)$ that solves $\mathcal{S}^n_{f,g_0, g_1, \ldots, g_{n-1}}$ for some $n \geq 1$, $f \in L^q$, $q>1$, and $g_k \in \re (H^{p_k}(D))_b$, $\frac{1}{2} < p_k  \leq 1$, $0 \leq k \leq n-1$, where $p = \min_k\{p_k\}$, with representation $w = \langle g_0, P_r(\theta - \cdot) + iQ_r(\theta - \cdot)\rangle + \widetilde{\Psi}$ from Theorem \ref{altmainhigher} such that $(\langle g_0, iQ_r(\theta - \cdot)\rangle + \widetilde{\Psi})_b = 0$, $w_b$ has an atomic decomposition. 
\end{corr}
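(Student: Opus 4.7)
The plan is to imitate the argument of Corollary \ref{atomicdecompforfirst} in the higher-order setting, leveraging the alternative representation provided by Theorem \ref{altmainhigher}. By hypothesis, the solution decomposes as $w = H_0 + \widetilde{\Psi}$, where $H_0 := \langle g_0, P_r(\theta - \cdot) + iQ_r(\theta - \cdot)\rangle$ is the Schwarz integral of $g_0$ and $\widetilde{\Psi}$ is the nested $\widetilde{T}$ expression supplied by Theorem \ref{altmainhigher}. Theorem \ref{GHJH2sixpointtwo} places $H_0$ in $H^{p_0}(D)$, so its boundary value in the sense of distributions $H_{0,b}$ exists; the existence of $w_b$ itself is guaranteed by Theorem \ref{altmainhigher}.

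Next, I would split $H_0$ additively into its Poisson integral and conjugate Poisson integral pieces and take boundary values term by term, using linearity of the boundary-value functional together with the standard fact that the Poisson integral recovers its distributional boundary datum, i.e., $\langle g_0, P_r\rangle_b = g_0$. Regrouping what remains according to the given hypothesis then gives
\[
w_b = \langle g_0, P_r\rangle_b + \bigl(\langle g_0, iQ_r(\theta - \cdot)\rangle + \widetilde{\Psi}\bigr)_b = g_0 + 0 = g_0.
\]

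Finally, since $g_0 \in \re(H^{p_0}(D))_b$, Theorem \ref{GHJH2sixpointtwo} supplies a sequence $\{c_n\} \in \ell^{p_0}$ and a collection of $p_0$-atoms $\{a_n\}$ with $g_0 = \sum_n c_n a_n$; substituting this into the equality above produces the desired atomic decomposition of $w_b$. The main obstacle is really just bookkeeping: carefully separating $H_0$ into its real and imaginary harmonic components so that the vanishing hypothesis is applied to exactly the imaginary piece that needs to be killed, and recognizing that a $p_0$-atomic decomposition for $g_0$ is acceptable, since the conclusion only asserts the existence of some atomic decomposition and does not require atoms tied to the minimum exponent $p = \min_k\{p_k\}$.
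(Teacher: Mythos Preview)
Your proposal is correct and matches the paper's intended argument. The paper does not give an explicit proof of this corollary; it is meant to follow by iterating the proof of Corollary \ref{atomicdecompforfirst}, which is exactly what you do. The only cosmetic difference is that the first-order proof in the paper writes $w_b = \re\{w_b\} + i\im\{w_b\}$ and identifies the vanishing hypothesis with $i\im\{w_b\} = 0$, whereas you split off the Poisson piece $\langle g_0, P_r\rangle$ directly and use $\langle g_0, P_r\rangle_b = g_0$; since $g_0$ is real these two presentations are equivalent.
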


The above corollary gives us conditions which characterize a subset of $\Hpqn$ having boundary values in the sense of distributions that have an atomic decomposition.

\printbibliography

@article{GHJH2,
    author = "Hoepfner, G. and  Hounie, J.",
    title = "Atomic Decompositions of Holomorphic Hardy Spaces in $\mathbb{S}^1$ and Applications",
    journal = "Lecture Notes of Seminario Interdisciplinare di Matematica 7",
    pages = "189-206",
    year = "2008",
}

@book {Vek,
    AUTHOR = {Vekua, I. },
     TITLE = {Generalized analytic functions},
 PUBLISHER = {Pergamon Press, London-Paris-Frankfurt; Addison-Wesley
              Publishing Co., Inc., Reading, Mass.},
      YEAR = {1962},
     PAGES = {xxix+668},
   MRCLASS = {30.81},
  MRNUMBER = {0150320},
MRREVIEWER = {A. E. Heins},
}

@book {KlimBook,
    AUTHOR = {Klimentov, S. },
     TITLE = {{\cyr Granichnye svo\u{i}} {\cyr stva obobshchennykh analiticheskikh funktsi\u{i}}},
    SERIES = {{\cyr Itogi Nauki. Yug Rossii. Matematicheskaya Monografiya} [Progress in
              Science. South Russia. Mathematical Monograph]},
    VOLUME = {7},
 PUBLISHER = {Yuzhny\u{\i} Matematicheski\u{\i} Institut, \\Vladikavkazski\u{\i} Nauchny\u{\i}
              Tsentr, Rossi\u{\i}skaya Akademiya Nauk i RSO-A, Vladikavkaz},
      YEAR = {2014},
     PAGES = {200},
      ISBN = {978-5-904695-26-2},
   MRCLASS = {30-02 (30E25 30G20 30H10 30H15 30H35)},
  MRNUMBER = {3408907},
MRREVIEWER = {Kuzman Adzievski},
}

@article {Klim2016,
    AUTHOR = {Klimentov, S. },
     TITLE = {The {R}iemann-{H}ilbert problem in {H}ardy classes for general
              first-order elliptic systems},
   JOURNAL = {Izv. Vyssh. Uchebn. Zaved. Mat.},
  FJOURNAL = {Izvestiya Vysshikh Uchebnykh Zavedeni\u{\i}. Matematika. Kazanski\u{\i}
              Gosudarstvenny\u{\i} Universitet},
      YEAR = {2016},
    NUMBER = {6},
     PAGES = {36--47},
      ISSN = {0021-3446},
   MRCLASS = {30G20 (30E25 35J56)},
  MRNUMBER = {3674903},
MRREVIEWER = {Armen Grigoryan},
       DOI = {10.3103/s1066369x16060049},
       URL = {https://doi.org/10.3103/s1066369x16060049},
}

@book {BigStein,
    AUTHOR = {Stein, E.},
     TITLE = {Harmonic analysis: real-variable methods, orthogonality, and
              oscillatory integrals},
    SERIES = {Princeton Mathematical Series},
    VOLUME = {43},
      NOTE = {With the assistance of Timothy S. Murphy,
              Monographs in Harmonic Analysis, III},
 PUBLISHER = {Princeton University Press, Princeton, NJ},
      YEAR = {1993},
     PAGES = {xiv+695},
      ISBN = {0-691-03216-5},
   MRCLASS = {42-02 (35Sxx 43-02 47G30)},
  MRNUMBER = {1232192},
MRREVIEWER = {Michael Cowling},
}

@book {Col,
    AUTHOR = {Colzani, L.},
     TITLE = {H{ARDY} {AND} {LIPSCHITZ} {SPACES} {ON} {UNIT} {SPHERES}},
      NOTE = {Thesis (Ph.D.)--Washington University in St. Louis},
 PUBLISHER = {ProQuest LLC, Ann Arbor, MI},
      YEAR = {1982},
     PAGES = {120},
   MRCLASS = {Thesis},
  MRNUMBER = {2632436},
       URL =
              {http://gateway.proquest.com/openurl?url_ver=Z39.88-2004&rft_val_fmt=info:ofi/fmt:kev:mtx:dissertation&res_dat=xri:pqdiss&rft_dat=xri:pqdiss:8302335},
}

@book {Rep,
    AUTHOR = {Mashreghi, J.},
     TITLE = {Representation theorems in {H}ardy spaces},
    SERIES = {London Mathematical Society Student Texts},
    VOLUME = {74},
 PUBLISHER = {Cambridge University Press, Cambridge},
      YEAR = {2009},
     PAGES = {xii+372},
      ISBN = {978-0-521-73201-7},
   MRCLASS = {30-01 (30H10 42-01 42B30 46E15)},
  MRNUMBER = {2500010},
MRREVIEWER = {Marcin M. Bownik},
       DOI = {10.1017/CBO9780511814525},
       URL = {https://doi.org/10.1017/CBO9780511814525},
}

@article {polyhardy,
    AUTHOR = {Wang, Y.},
     TITLE = {On {H}ilbert-type boundary-value problem of poly-{H}ardy class
              on the unit disc},
   JOURNAL = {Complex Var. Elliptic Equ.},
  FJOURNAL = {Complex Variables and Elliptic Equations. An International
              Journal},
    VOLUME = {58},
      YEAR = {2013},
    NUMBER = {4},
     PAGES = {497--509},
      ISSN = {1747-6933},
   MRCLASS = {30E25 (30H10 31A25 45E05)},
  MRNUMBER = {3038743},
MRREVIEWER = {Kuzman Adzievski},
       DOI = {10.1080/17476933.2011.636809},
       URL = {https://doi.org/10.1080/17476933.2011.636809},
}

@book {Balk,
    AUTHOR = {Balk, M.},
     TITLE = {Polyanalytic functions},
    SERIES = {Mathematical Research},
    VOLUME = {63},
 PUBLISHER = {Akademie-Verlag, Berlin},
      YEAR = {1991},
     PAGES = {197},
      ISBN = {3-05-501292-5},
   MRCLASS = {30G20 (31A30)},
  MRNUMBER = {1184141},
MRREVIEWER = {Heinrich Begehr},
}

@article {polyhardyrep,
    AUTHOR = {Du, Z. and Guo, G. and Wang, N.},
     TITLE = {Decompositions of functions and {D}irichlet problems in the
              unit disc},
   JOURNAL = {J. Math. Anal. Appl.},
  FJOURNAL = {Journal of Mathematical Analysis and Applications},
    VOLUME = {362},
      YEAR = {2010},
    NUMBER = {1},
     PAGES = {1--16},
      ISSN = {0022-247X},
   MRCLASS = {31A30 (31A25 31A35)},
  MRNUMBER = {2557663},
MRREVIEWER = {Valery V. Karachik},
       DOI = {10.1016/j.jmaa.2009.07.048},
       URL = {https://doi.org/10.1016/j.jmaa.2009.07.048},
}

@article {metahardy,
    AUTHOR = {Ku, M. and He, F. and Wang, Y.},
     TITLE = {Riemann-{H}ilbert problems for {H}ardy space of meta-analytic
              functions on the unit disc},
   JOURNAL = {Complex Anal. Oper. Theory},
  FJOURNAL = {Complex Analysis and Operator Theory},
    VOLUME = {12},
      YEAR = {2018},
    NUMBER = {2},
     PAGES = {457--474},
      ISSN = {1661-8254},
   MRCLASS = {30E25 (30H10)},
  MRNUMBER = {3756167},
MRREVIEWER = {Kuzman Adzievski},
       DOI = {10.1007/s11785-017-0705-1},
       URL = {https://doi.org/10.1007/s11785-017-0705-1},
}

@book {Duren,
    AUTHOR = {Duren, P.},
     TITLE = {Theory of {$H^{p}$} spaces},
    SERIES = {Pure and Applied Mathematics, Vol. 38},
 PUBLISHER = {Academic Press, New York-London},
      YEAR = {1970},
     PAGES = {xii+258},
   MRCLASS = {46.30 (30.00)},
  MRNUMBER = {0268655},
MRREVIEWER = {D. Sarason},
}

@book {BegBook,
    AUTHOR = {Begehr, H.},
     TITLE = {Complex analytic methods for partial differential equations},
      NOTE = {An introductory text},
 PUBLISHER = {World Scientific Publishing Co., Inc., River Edge, NJ},
      YEAR = {1994},
     PAGES = {x+273},
      ISBN = {981-02-1550-9},
   MRCLASS = {35C15 (30E20 35A20)},
  MRNUMBER = {1314196},
MRREVIEWER = {Guo Chun Wen},
       DOI = {10.1142/2162},
       URL = {https://doi.org/10.1142/2162},
}

@article {Beg,
    AUTHOR = {Begehr, H. and Schmersau, D.},
     TITLE = {The {S}chwarz problem for polyanalytic functions},
   JOURNAL = {Z. Anal. Anwendungen},
  FJOURNAL = {Zeitschrift f\"{u}r Analysis und ihre Anwendungen. Journal for
              Analysis and its Applications},
    VOLUME = {24},
      YEAR = {2005},
    NUMBER = {2},
     PAGES = {341--351},
      ISSN = {0232-2064},
   MRCLASS = {30E25 (30G20 31A30 35J40)},
  MRNUMBER = {2174027},
MRREVIEWER = {Wolfgang Tutschke},
       DOI = {10.4171/ZAA/1244},
       URL = {https://doi.org/10.4171/ZAA/1244},
}

@book {ellipquasi,
    AUTHOR = {Astala, K. and Iwaniec, T. and Martin, G.},
     TITLE = {Elliptic partial differential equations and quasiconformal
              mappings in the plane},
    SERIES = {Princeton Mathematical Series},
    VOLUME = {48},
 PUBLISHER = {Princeton University Press, Princeton, NJ},
      YEAR = {2009},
     PAGES = {xviii+677},
      ISBN = {978-0-691-13777-3},
   MRCLASS = {30C62 (30G20 35J46 35J60 35J92)},
  MRNUMBER = {2472875},
MRREVIEWER = {Olli Martio},
}

@book {Koosis,
    AUTHOR = {Koosis, P.},
     TITLE = {Introduction to {$H_p$} spaces},
    SERIES = {Cambridge Tracts in Mathematics},
    VOLUME = {115},
   EDITION = {Second},
      NOTE = {With two appendices by V. P. Havin [Viktor Petrovich Khavin]},
 PUBLISHER = {Cambridge University Press, Cambridge},
      YEAR = {1998},
     PAGES = {xiv+289},
      ISBN = {0-521-45521-9},
   MRCLASS = {30D55 (42B30 46E15 46J15)},
  MRNUMBER = {1669574},
MRREVIEWER = {D. Sarason},
}

@article {Coif1,
    AUTHOR = {Coifman, R.},
     TITLE = {A real variable characterization of {$H^{p}$}},
   JOURNAL = {Studia Math.},
  FJOURNAL = {Polska Akademia Nauk. Instytut Matematyczny. Studia
              Mathematica},
    VOLUME = {51},
      YEAR = {1974},
     PAGES = {269--274},
      ISSN = {0039-3223},
   MRCLASS = {46E15},
  MRNUMBER = {358318},
MRREVIEWER = {J. A. van Casteren},
       DOI = {10.4064/sm-51-3-269-274},
       URL = {https://doi.org/10.4064/sm-51-3-269-274},
}

@article {Coif2,
    AUTHOR = {Coifman, R. and Weiss, G.},
     TITLE = {Extensions of {H}ardy spaces and their use in analysis},
   JOURNAL = {Bull. Amer. Math. Soc.},
  FJOURNAL = {Bulletin of the American Mathematical Society},
    VOLUME = {83},
      YEAR = {1977},
    NUMBER = {4},
     PAGES = {569--645},
      ISSN = {0002-9904},
   MRCLASS = {42A40 (30A78 42A18 43A85)},
  MRNUMBER = {447954},
MRREVIEWER = {Alberto Torchinsky},
       DOI = {10.1090/S0002-9904-1977-14325-5},
       URL = {https://doi.org/10.1090/S0002-9904-1977-14325-5},
}
\end{document}